\def\smallddots{\mathinner{\raise7pt\hbox{.}\raise4pt\hbox{.}\raise1pt\hbox{.}}}
\def\smallsdots{\mathinner{\raise1pt\hbox{.}\raise4pt\hbox{.}\raise7pt\hbox{.}}}
\DeclareMathOperator{\diag}{diag} 
\DeclareMathOperator{\rank}{rank}
\DeclareMathOperator{\nrank}{nrank}
\DeclareMathOperator*{\argmax}{arg\,max}   
\DeclareMathOperator*{\vol}{{\it v}_2}
\DeclareMathOperator*{\volk}{{\it v}_{2, r}}
\DeclarePairedDelimiter\parens{\lparen}{\rparen}
\newcommand{\volp}[1]{\vol\parens*{#1}} 
\numberwithin{equation}{section}
\numberwithin{table}{section}
\newtheorem{theorem}{Theorem}[section]
\newtheorem{lemma}{Lemma}[section]
\newtheorem{corollary}{Corollary}[section]
\newtheorem{example}{Example}[section]
\newtheorem{definition}{Definition}[section]
\newtheorem{remark}{Remark}[section]
\newtheorem{fig.}{FIGURE}[section]
\begin{document}
\author{}

\title{\bf CUR Low  Rank Approximation at Deterministic Sublinear Cost\footnote{Some results of this paper have been presented  in August of 2019 at  MMMA, Moscow, Russia, and at MACIS 2019, on November 13--15, 2019, in Gebze, Turkey. Our extension of Fast Multipole Method to Superfast Multipole
Method was presented at the SIAM Conference on Computational Science and Engineering
in Atlanta, Georgia, on February-March 2017.}}
\smallskip

\author{Qi Luan$^{[1],[a]}$,
Victor Y. Pan$^{[1,2],[b]}$, and 
John Svadlenka$^{[1],[c]}$
\and\\
$^{[1]}$ Ph.D. Programs in  Computer Science and Mathematics \\
The Graduate Center of the City University of New York \\
New York, NY 10036 USA 
\\
$^{[2]}$ Department of Computer Science \\
Lehman College of the City University of New York \\
Bronx, NY 10468 USA 
\\
$^{[a]}$ qi\_luan@yahoo.com \\ 
$^{[b]}$ victor.pan@lehman.cuny.edu \\ 
homepage: http://comet.lehman.cuny.edu/vpan/  \\
$^{[c]}$ jsvadlenka@gradcenter.cuny.edu \\
}
 
\date{}



%

 
\maketitle
\begin{abstract}
A matrix algorithm runs at {\em sublinear cost} if it uses much fewer memory cells and arithmetic operations  than the input matrix has entries. Such algorithms are indispensable for Big Data Mining and Analysis. Quite typically in that area the input matrices are so immense that realistically one can only access a small fraction of  all their entries but can access and process at sublinear cost their Low Rank Approximation {\em (LRA)}. Can, however, we compute LRA at sublinear cost? Adversary argument shows that the output of any algorithm running at sublinear cost is  extremely far from LRA of the worst case input matrices and  even of the matrices of small families of our Appendix, but we prove that some deterministic  sublinear cost algorithms output  reasonably close LRA in a memory efficient form of CUR LRA if an input matrix admits LRA and is Symmetric Positive Semidefinite or is  very close to a low rank  matrix. The latter result is technically simple but provides some (very limited but long overdue) support for the well-known empirical efficiency of sublinear cost LRA by means of Cross-Approximation. We demonstrate the power of application of such LRA by turning the Fast Multipole celebrated Method into  Superfast Multipole  Method. The design  and analysis of our algorithms rely on extensive prior study of the link of  LRA of a matrix to maximization of its volume. 
   
   
\end{abstract}
 
 
\paragraph{Keywords:} Low Rank Approximation, CUR LRA, Sublinear cost, 
Symmetric positive semidefinite  matrices, Cross--Approximation,     
   Maximal volume, Fast Multipole Method
\

\paragraph{\bf 2020 Math. Subject  Classification:}
 65F55, 65Y20, 68Q25, 15A15


\section{Introduction}\label{sintro}


{\bf 1.1. LRA problem.} 
An $m\times n$ matrix $W$ admits its close approximation of rank at most $r$ if and only if
 the matrix $W$ satisfies the bound 
\begin{equation}\label{eqlrk}  
W=AB+E,~||E||\le \epsilon~ ||W||,
\end{equation}
for $A\in \mathbb C^{m\times r}$,
 $B\in \mathbb C^{r\times n}$, 
  $(m+n)r\ll mn$,
  a~fixed matrix~norm $||\cdot||$, and~a~fixed  tolerance $\epsilon$, chosen in context, e.g., depending on computer precision, the assumptions about an input, and the requirements to
   the output.\footnote{Here and hereafter $a\ll b$ and $b\gg a$ mean that the ratio $|a/b|$ is small in context. It is customary in this field to rely on the informal basic concept of low rank approximation; in high level description of our LRA algorithms we also use some other informal concepts such as “large”, “small”, “tiny”, “typically”, “near”, “close”, and “approximate”, quantified in context; we complement this description with formal presentation and analysis.}
Such an LRA approximates the $mn$  entries of $W$ by using $(m+n)r$ entries of $A$ and $B$, and one  can compute the approximation  $AB{\bf v}$ of the product $W{\bf v}$ for a vector
${\bf v}$ by applying less  than $2(m+n)r\ll mn$ arithmetic operations.
This is a crucial benefit 
in applications of LRA to Big 
Data Mining and Analysis, where quite typically the size $m\times n$ of an  input matrix is so immense that  one can only access a tiny fraction of all its  
$mn$  entries but where
 the matrix admits LRA.   
 
 Can we, however, compute close  LRA 
{\em at sublinear cost}, that is, by using much fewer memory cells and  arithmetic operations than the number $mn$ of all  entries of an input matrix? 
Based on adversary argument one can prove that the output of any algorithm running at sublinear cost is
 extremely far from LRA   of the worst case inputs and even of the matrices of small families of our Appendix, but
 for two decades   
Cross--Approximation (C–A) 
iterations (see Sections 1.3 and \ref{scait}) have been routinely  computing LRA worldwide at 
sublinear cost, which further decreased where an input matrix is sparse. Moreover the 
      iterations output LRA 
in its special form of CUR LRA (see Section \ref{scur}), particularly memory efficient. 

It may be surprising, but no formal support has been 
provided for this empirical efficiency so far, although it is well-understood by now that the power of the algorithm is closely linked   to computing a submatrix of an input matrix that has the  maximal or a nearly maximal volume among all submatrices of a fixed size (see Section \ref{svlm1}). Having such a submatrix one can readily define a CUR LRA with output error norm  within roughly a factor of
 $\sqrt {mn}$ from its optimal bound,
 defined by the Eckart-Young-Mirski theorem. 

Furthermore for a large class of input matrices $W$  any LRA  which is
close within a reasonable factor
to $W$ is a springboard 
for the sublinear cost algorithm for iterative refinement 
 recently proposed and  tested in 
 \cite{LPSa}.  
 

{\bf 1.2. Our  results.}
Our main result is  
 sublinear cost deterministic algorithms whose output  CUR LRAs 
 approximate  any $n\times n$  Symmetric Positive Semidefinite {(\em SPSD)} matrix admitting LRA. The relative output error of these algorithms   is
within a factor of $(1+\epsilon)n$ from the  optimal bound
for any fixed positive tolerance  
$\epsilon$ and for any SPSD input
provided that that $n/r$ is in  
$O(1/\epsilon)$.
This is in sharp contrast to the case of general input where at sublinear cost one cannot verify even whether
 a candidate LRA is closer than just a matrix filled with 0s to some matrix 
 of our family of the Appendix.
The only known alternative to our  deterministic SPSP algorithm is the randomized algorithm  of \cite{MW17}, which
relies on completely different techniques
and is harder to implement because it is much more involved; to its theoretical advantage, however,  its  output 
LRA  becomes nearly optimal 
for 
very large $n$.

In Part III we readily elaborate upon a simple observation 
that a single loop of C-A iterations computes a $p\times r$ or $r\times q$ submatrix having maximal volume within a bounded factor, provided that an input matrix $W$ has rank $r\le \min\{p,q\}$.
It follows that a single C-A loop
outputs a reasonably close CUR LRA
also in  a small neighborhood of $W$.
Clearly this support of C-A iterations is very limited but is the first step towards meeting a long overdue challenge.

In Sections \ref{sextpr} and  \ref{sHSS} we demonstrate the power of C-A  iterations by applying them to the acceleration of the {\em Fast Multipole Method (FMM)},  which is one of ``the Top Ten Algorithms of the 20-th century" \cite{C00}. The method involves LRAs of the off-diagonal blocks of an input matrix.
 In some applications of FMM such LRAs are a part of an input, but in other  important cases they are not known, and then their computation becomes bottleneck of the method. In these cases incorporation of sublinear cost  C-A iterations implies dramatic acceleration 
 of FMM, thus defining {\em Superfast Multipole Method}.  We demonstrate this progress with extensive 
 tests on the real world data from \cite{XXG12}. 

Our work can provide some new insight into the link of LRA to the maximization of matrix volume, extensively studied in  the LRA field. 

   
   

{\bf 1.3. Related works.}
 For decades
the researchers working in  Numerical Linear Algebra, later joined by those from Computer and Data Sciences, have been
contributing  extensively to LRA,
whose immense bibliography  
can be  traced back to \cite{G65}, \cite{BG65}, 
  \cite{K85},  \cite{C87},  \cite{CH90},  \cite{CH92},
   \cite{HLY92},  \cite{HP92},  \cite{CI94} and can be  partly accessed  via 
\cite{M11}, \cite{HMT11}, \cite{W14}, \cite{CLO16}, \cite{KS17}, \cite{B19}, \cite{M18},
\cite{IVWW19}, \cite{M19}, \cite{TYUC19},  \cite{BCW20}, and \cite{YXY20}.

   Cross-Approximation (C-A) iterations for LRA have specialized to LRA the Alternating Directions Implicit (ADI) method  \cite{PTVF07} and are more commonly referred to as Adaptive Cross-Approximation (ACA) iterations.
     Their derivation  can be traced to
      \cite{GZT95},  \cite{T96}, \cite{GTZ97},   \cite{GZT97},
     \cite{T00}, and \cite{B00} and was  strengthened with exploiting 
      the maximization of matrix volume 
        in \cite{BR03}, 
        \cite{BG06}, \cite{GOSTZ10},   \cite{B11},
      \cite{GT11},   and \cite{OZ18}.           
On matrix volume and its link to LRA see
    also \cite{K85},  \cite{B-I92},
      \cite{CI94},  \cite{GT01}, and \cite{GT11}.
     
  Our  formal theoretical support for  empirical efficiency of 
  C-A   
     iterations simplifies   
  and strengthens the results of 
  \cite{PLSZ16}, \cite{P16}, and 
\cite[Part II]{PLSZ19}, where  much more involved 
 proofs implied similar results but with larger upper bounds on the output errors. This was because the latter works
 relied on the theorems stated in \cite{OZ18}
 in  weaker form than they are actually implied by their
 advanced proofs in \cite{OZ18}. 
   
 The papers \cite{PLSZ16}, \cite{P16}, and 
\cite{PLSZ19} extend to LRA the   techniques of randomized matrix computations 
 in \cite{PQY15}, \cite{PZ17a}, and \cite{PZ17b}. In particular the latter papers  avoid the communication extensive stage of pivoting  in Gaussian elimination by means of  pre-processing of an input matrix. 
   
The algorithms of these earlier papers compute at sublinear cost close LRA of a large class of matrices that admit LRA
and in a sense even  of most  of
such matrices.  They refer to  
 the algorithms as
{\em superfast} rather than sublinear cost  
 algorithms, as this is common  in the study of computations with matrices having small  displacement rank, 
 including Toeplitz, Hankel, Vandermonde, Cauchy, Sylvester, Frobenius, resultant, and Nevanlinna-Pick  matrices
  (see \cite{KKM79}, \cite{OP98},  \cite{P00},  \cite{P01}, \cite{P15}, and the bibliography therein).   

Our  deterministic algorithms in Parts II and III have departed from the recent progress in  \cite{OZ18} and \cite{CKM19} and are  technically independent of the  important randomized sublinear cost LRA for special classes of input matrices devised in
 the papers  \cite{MW17}, \cite{IVWW19},  \cite{SW19},   
  \cite{BCW20}.
  
  
{\bf 1.4. Organization of the paper.}
Part I of our paper is made up of the next four sections: we cover
background material on matrix computations in the next section, 
 CUR LRA in Section \ref{scur},   matrix volumes and the impact of their maximization
on LRA in Section \ref  {svlm1},
 and C-A iterations in Section \ref{scait}.

 Sections \ref{smnthr}--\ref{sect:comp_anal} make up Part II
 of the paper.
 In Section \ref{smnthr} we state our main results for SPSD inputs. We present  supporting algorithms  and prove their correctness  in Sections \ref{sect:proof_thm1} and
 \ref{sect:proof_thm2}. We estimate their complexity in Section \ref{sect:comp_anal}.

Part III of the paper is made up of 
Sections \ref{svlmxmz} -- \ref{sHSS}.
 In  Sections \ref{svlmxmz} -- \ref{scurac} we  prove that already a single C-A loop outputs  
a reasonably close  LRA of a matrix that admits a very close LRA. In Section \ref{sextpr} we  apply sublinear cost LRA to the acceleration of FMM and in Section
  \ref{sHSS} support this with extensive test results, which are the contribution of the third author of this paper.

 
 In the Appendix  
 we  specify small  families of matrices that admit LRA but do not allow its accurate computation by means of any    sublinear cost algorithm.


\medskip
\medskip
\medskip

{\bf \large PART I. BACKGROUND MATERIAL}

  
\section{Basic definitions for matrix com\-putations, Eckart-Young-Mirsky theorem, and a lemma}\label{sgnm} 
 

$\mathbb R^{m\times n}$ is the class of $m\times n$ 
matrices with 
real entries.\footnote{We simplify  our presentation by confining it to computations  with real matrices.}

$I_s$ denotes the $s\times s$ identity matrix. $O_{q,s}$ denotes the $q\times s$  matrix filled with zeros.

$\diag (B_1,\dots,B_k)=\diag(B_j)_{j=1}^k$  denotes a $k\times k$ block diagonal matrix   
with diagonal blocks $B_1,\dots,B_k$.  
  
$(B_1~|~\dots~|~B_k)$ and $(B_1,\dots,B_k)$ 
denote a $1\times k$ block matrix with blocks $B_1,\dots,B_k$.  

$W^T$ and $W^*$ denote the transpose and the Hermitian transpose 
of an $m\times n$ matrix $W=(w_{ij})_{i,j=1}^{m,n}$, respectively.
 $W^*=W^T$ if the  matrix $W$ is real.
 
For two sets $\mathcal I\subseteq\{1,\dots,m\}$  
and $\mathcal J\subseteq\{1,\dots,n\}$   define
the submatrices
\begin{equation}\label{eq111}
W_{\mathcal I,:}:=(w_{i,j})_{i\in \mathcal I; j=1,\dots, n},  
W_{:,\mathcal J}:=(w_{i,j})_{i=1,\dots, m;j\in \mathcal J},~ 
W_{\mathcal I,\mathcal J}:=(w_{i,j})_{i\in \mathcal I;j\in \mathcal J}.
\end{equation} 

An $m\times n$ matrix $W$ is {\em orthogonal} 
if $W^*W=I_n$ or $WW^*=I_m$.

{\em Compact SVD} of a matrix $W$, hereafter just {\em SVD}, is defined by the equations 
\begin{equation}\label{eqsvd}
W=S_W\Sigma_WT_W^*,
\end{equation}  
where
$$S_W^*S_W=
T_W^*T_W=I_{\rho},~ 
\Sigma_W:=\diag(\sigma_j(W))_{j=1}^{\rho},~\rho={\rank (W)},$$
$\sigma_j(W)$
denotes the $j$th largest singular value of $W$ for $j=1,\dots,\rho$;
 $\sigma_{\rho}(W)>\sigma_j(W)=0~{\rm for}~j>\rho$.
 
  $||W||=||W||_2$, $||W||_F$, and $||W||_C$
 denote 
spectral, Frobenius, and Chebyshev norms
of a matrix $W$, respectively,
such that (see \cite[Section 2.3.2 and Corollary 2.3.2]{GL13})
$$||W||=\sigma_1(W),~ ||W||_F^2:=\sum_{i,j=1}^{m,n}|w_{ij}|^2=\sum_{j=1}^{\rank(W)}\sigma_j^2(W),~ 
||W||_C:=\max_{i,j=1}^{m,n}|w_{ij}|,$$  
\begin{equation}\label{eq0}
||W||_C\le ||W||\le ||W||_F\le \sqrt {mn}~||W||_C,~
 ||W||_F^2\le
 \min\{m,n\}~||W||^2.
\end{equation}

 By virtue of the Eckart-Young-Mirsky theorem below, one can obtain optimal rank-$\rho$ approximation of a matrix  under both spectral and Frobenius norms by setting
 to 0 all its singular values but its 
 $\rho$ largest ones:

\begin{theorem}\label{thtrnc} {\rm \cite[Theorem 2.4.8]{GL13}.} The optimal
bound on the error of rank-$\rho$ approximation of a matrix $M$ 
 is equal to
 $\sigma_{\rho+1}(M)$ under the spectral norm $|\cdot|=||\cdot||$
and to 
$(\sum_{j\ge \rho}\sigma_j^2(M))^{1/2}$ 
under the Frobenius norm $|\cdot|=||\cdot||_F$.
\end{theorem}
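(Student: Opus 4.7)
The plan is to prove the theorem in two halves for each norm: an achievability (upper) bound via truncation of the SVD, and a matching lower bound that no rank-$\rho$ matrix can beat. Throughout, I would fix the compact SVD from equation~(\ref{eqsvd}), write $M = S_M \Sigma_M T_M^*$, and let $M_\rho := \sum_{j=1}^{\rho}\sigma_j(M)\, s_j t_j^*$, where $s_j$ and $t_j$ denote the $j$th columns of $S_M$ and $T_M$. This $M_\rho$ has rank at most $\rho$ by construction and will serve as the explicit optimizer.

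For the upper bounds, first I would note that $M - M_\rho = \sum_{j > \rho}\sigma_j(M)\, s_j t_j^*$ is itself in SVD form with singular values $\sigma_{\rho+1}(M),\sigma_{\rho+2}(M),\dots$ Applying the definitions of the spectral and Frobenius norms recalled before the theorem statement then immediately gives $\|M-M_\rho\| = \sigma_{\rho+1}(M)$ and $\|M-M_\rho\|_F^2 = \sum_{j>\rho}\sigma_j^2(M)$, establishing that both bounds are attained.

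For the spectral-norm lower bound, let $B$ be any matrix of rank at most $\rho$. Its null space has dimension at least $n-\rho$, so it must intersect the $(\rho+1)$-dimensional subspace $\mathrm{span}(t_1,\dots,t_{\rho+1})$ in a non-trivial vector. Pick a unit vector $x$ in this intersection; writing $x = \sum_{j=1}^{\rho+1}\alpha_j t_j$ with $\sum_j|\alpha_j|^2=1$, orthonormality of the $s_j$ gives $\|Mx\|^2 = \sum_{j=1}^{\rho+1}\sigma_j^2(M)|\alpha_j|^2 \ge \sigma_{\rho+1}^2(M)$. Since $Bx=0$, I conclude $\|M-B\| \ge \|(M-B)x\| = \|Mx\| \ge \sigma_{\rho+1}(M)$.

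For the Frobenius-norm lower bound I expect the main obstacle, because a one-vector test no longer captures the full norm; the clean route is Weyl's inequality for singular values, $\sigma_{i+j-1}(X+Y)\le \sigma_i(X)+\sigma_j(Y)$. With $X = M-B$, $Y = B$, and $j=\rho+1$ we have $\sigma_{\rho+1}(B)=0$, whence $\sigma_{i+\rho}(M) \le \sigma_i(M-B)$ for every $i\ge 1$. Squaring and summing yields
\begin{equation*}
\|M-B\|_F^2 \;=\; \sum_{i\ge 1}\sigma_i^2(M-B) \;\ge\; \sum_{i\ge 1}\sigma_{i+\rho}^2(M) \;=\; \sum_{j>\rho}\sigma_j^2(M),
\end{equation*}
which matches the upper bound and completes the proof. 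If I wanted to avoid citing Weyl, the alternative plan would be to reduce to the diagonal case via unitary invariance of $\|\cdot\|_F$ and argue directly on singular values, but the Weyl-based route above is the most economical.
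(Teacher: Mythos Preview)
Your proof is correct and follows the standard route to the Eckart--Young--Mirsky theorem: SVD truncation for achievability, a dimension-count / null-space argument for the spectral lower bound, and Weyl's interlacing inequality for the Frobenius lower bound. Note, however, that the paper does not prove this theorem at all --- it simply cites it as \cite[Theorem~2.4.8]{GL13} and uses it as background. So there is no ``paper's proof'' to compare against; your argument is a complete and self-contained substitute for the external reference. (Incidentally, the summation index in the paper's Frobenius bound should read $j>\rho$ rather than $j\ge\rho$, as you have it.)
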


$W^+:=T_W\Sigma_W^{-1}S_W^*$ is the Moore--Penrose 
pseudo inverse of an $m\times n$ matrix $W$.
\begin{equation}\label{eqsgm}
||W^+||\sigma_{r}(W)=1
\end{equation}
for a rank-$r$ matrix $W$.

A matrix $W$  has 
$\epsilon$-{\em rank} at most $r>0$
for a fixed tolerance $\epsilon>0$ if there is a matrix $W'$  
 of rank $r$ such that 
 $||W'-W||/||W||\le \epsilon$.  
We write $\nrank(W)=r$ and say that a
matrix $W$ has {\em numerical rank} $r$
if it has $\epsilon$-rank $r$
for a small tolerance  $\epsilon$ fixed in context and typically being linked to
machine precision or the level of relative errors of the computations (cf. \cite[page 276]{GL13}).

 
 \begin{lemma}\label{lehg}
Let $G\in\mathbb R^{k\times r}$, 
$\Sigma \in\mathbb R^{r\times r}$ and
$H\in\mathbb R^{r\times l}$
and let  the matrices $G$, $H$ and 
$\Sigma$ have full rank 
$r\le \min\{k,l\}$.
Then 
$||(G\Sigma H)^+||
\le ||G^+||~||\Sigma^+||~||H^+||$. 
\end{lemma}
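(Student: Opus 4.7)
The plan is to derive the factorization $(G\Sigma H)^+ = H^+\,\Sigma^+\,G^+$ and then invoke submultiplicativity of the spectral norm.

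First I would verify that all factors are compatibly full rank so that a product-pseudoinverse identity is applicable. By hypothesis $G$ has full column rank $r$, $H$ has full row rank $r$, and $\Sigma$ is a full-rank $r\times r$ square matrix, hence invertible with $\Sigma^+=\Sigma^{-1}$. In particular $G\Sigma$ still has full column rank $r$ (multiplication on the right by an invertible $\Sigma$ preserves column rank).

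Next I would apply the classical rule that for matrices $A$ with full column rank and $B$ with full row rank and matching inner dimension, one has $(AB)^+ = B^+ A^+$; this is the standard reverse-order law whose proof amounts to verifying the four Moore--Penrose equations and can be quoted from any reference on generalized inverses. Applying it twice: taking $A=G\Sigma$ (full column rank) and $B=H$ (full row rank) gives
\begin{equation*}
(G\Sigma H)^+ = H^+\,(G\Sigma)^+ .
\end{equation*}
Then taking $A=G$ (full column rank) and $B=\Sigma$ (square invertible, hence both full column and full row rank) gives $(G\Sigma)^+ = \Sigma^{-1}\,G^+ = \Sigma^+\,G^+$. Substituting back yields $(G\Sigma H)^+ = H^+\,\Sigma^+\,G^+$.

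Finally, taking spectral norms and using submultiplicativity of $\|\cdot\|$,
\begin{equation*}
\|(G\Sigma H)^+\| = \|H^+\,\Sigma^+\,G^+\| \le \|H^+\|\,\|\Sigma^+\|\,\|G^+\| = \|G^+\|\,\|\Sigma^+\|\,\|H^+\|,
\end{equation*}
which is the desired inequality. The only nontrivial point is the reverse-order identity for the pseudoinverse; because it generally fails without a full-rank hypothesis, the careful check that each successive product indeed satisfies the full-rank assumption (owing to the invertibility of $\Sigma$) is the main thing to get right.
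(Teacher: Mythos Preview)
Your proof is correct but follows a different route from the paper's. The paper works entirely through SVDs: writing $G=S_G\Sigma_GT_G$ and $H=S_H\Sigma_HT_H$, it collapses $G\Sigma H$ to $S_G M T_H$ with $M=\Sigma_GT_G\Sigma S_H\Sigma_H$ an $r\times r$ invertible matrix, bounds $\|M^{-1}\|$ by submultiplicativity, and then observes that an SVD of $M$ yields an SVD of $G\Sigma H$, so that $\sigma_r(G\Sigma H)=\sigma_r(M)\ge\sigma_r(G)\sigma_r(\Sigma)\sigma_r(H)$; the claim then follows from $\|W^+\|=1/\sigma_r(W)$. Your argument instead invokes the reverse-order law $(AB)^+=B^+A^+$ under the full-rank hypotheses to obtain the explicit identity $(G\Sigma H)^+=H^+\Sigma^+G^+$ and applies submultiplicativity directly. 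Your route is shorter and yields the stronger fact that equality of pseudoinverses holds, not merely a norm bound; the paper's route is fully self-contained, never appealing to the Penrose axioms or an external pseudoinverse identity.
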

For the sake of completeness of our
presentation we include a proof of this well-known result. 
\begin{proof} 
Let $G=S_G\Sigma_GT_G$ and $H=S_H\Sigma_HT_H$ be SVDs
where $S_G$, $T_G$, $D_H$, and $T_H$ are orthogonal matrices,
$\Sigma_G$ and $\Sigma_H$ are the $r\times r$ 
nonsingular diagonal matrices of the singular values,
and $T_G$ and $S_H$ are  $r\times r$ matrices. 
Write $$M:=\Sigma_GT_G\Sigma S_H\Sigma_H.$$ 
Then 
$$M^{-1}=\Sigma_H^{-1} S_H^*\Sigma^{-1}T_G^*\Sigma_G^{-1},$$ and consequently
$$||M^{-1}||\le ||\Sigma_H^{-1}||~|| S_H^*||~||\Sigma^{-1}||~||T_G^*||~||\Sigma_G^{-1}||.$$ 
Hence
$$||M^{-1}||\le||\Sigma_H^{-1}||~
||\Sigma^{-1}||~|\Sigma_G^{-1}||$$ because $S_H$ and $T_G$
are orthogonal matrices. It follows from 
(\ref{eqsgm}) for $W=M$ that

$$\sigma_r(M)\ge\sigma_r(G)\sigma_r(\Sigma)\sigma_r(H).$$

Now let  $M=S_M\Sigma_MT_M$ be SVD
where $S_M$ and $T_M$ are $r\times r$ orthogonal matrices. 

Then $S:=S_GS_M$ and $T:=T_MT_H$ are orthogonal matrices,
and so $G\Sigma H=S\Sigma_M T$ is SVD. 

Therefore 
$\sigma_r(G\Sigma H)=\sigma_r(M)\ge  \sigma_r(G)\sigma_r(\Sigma)\sigma_r(H)$.
Combine this bound with (\ref{eqsgm})
for $W$ standing for $G$, $\Sigma$,
$H$, and $G \Sigma H$.
\end{proof}
            

\section{CUR LRA}\label{scur} 

 

{\em CUR LRA} of a matrix $W$ of numerical rank at most $r$ is defined by three matrices 
$C$, $U$, and $R$, with $C$ and 
$R$  made up of $l$ columns
and  $k$ rows of $W$, respectively,
$U\in \mathbb R^{l\times k}$  said to be the {\em nucleus} of 
CUR LRA, 
  \begin{equation}\label{eqklr}  
0<r\le k\le m,~r\le l\le n,~kl\ll mn,
\end{equation}
 \begin{equation}\label{eqcurlra}   
W=CUR+E,~{\rm and}~||E||/||W||\le \epsilon,~{\rm for~a~small~ tolerance}~\epsilon>0.
\end{equation}
CUR LRA is a special case
 of LRA of (\ref{eqlrk}) where $k=l=r$ and, say, $A=LU$, $B=R$.
 Conversely, given LRA of (\ref{eqlrk})
  one can compute CUR LRA of (\ref{eqcurlra}) at sublinear cost
  (see \cite{LPa} and  \cite{PLSZ19}).
  
Define a {\em canonical} CUR LRA as follows. 

(i) Fix two sets of  columns and rows of $W$
and define its two submatrices $C$ and $R$
made up of these columns and rows, respectively.
 
 (ii) Define the $k\times l$  submatrix 
 $W_{k,l}$
made up of all common entries of $C$ and $R$, and call it a {\em CUR generator}. 

(iii) Compute its rank-$r$ truncation $W_{k,l,r}$ by setting to 0 all its
  singular values, 
 except for the
 $r$ largest ones. 

(iv)  Compute the Moore--Penrose pseudo inverse
 $U=:W_{k,l,r}^+$ and call it the {\em nucleus} of CUR LRA
 of the matrix $W$ (cf. \cite{DMM08}, \cite{OZ18});  see an alternative choice of a nucleus in \cite{MD09}).

$W_{r,r}=W_{r,r,r}$,
 and if a CUR generator  
 $W_{r,r}$ is  nonsingular,
 then $U=W_{r,r}^{-1}$.
 

\section{Matrix Volumes}\label{svlm1}
  

\subsection{Definitions and Hadamard's bound}\label{svlmdf}
  


 \begin{definition}\label{defmxv}
For   
three integers $k$,  $l$, and $r$ such that $1\le r\le \min\{k,l\}$,
define the {\em volume} $v_2(M):=\prod_{j=1}^{\min\{k,l\}} \sigma_j(M)$ and  $r$-{\em projective volume}
$v_{2,r}(M):=\prod_{j=1}^r \sigma_j(M)$
of a  $k\times l$ matrix $M$ such that
$v_{2,r}(M)=v_{2}(M)~{\rm if}~r=\min\{k,l\}$,
$v_2(M)=\sqrt{\det(MM^*)}$ if $k\le l$;
$v_2(M)=\sqrt{\det(M^*M)}$ if $k\ge l$,
$v_2(M)=|\det(M)|$ if $k = l$.
\end{definition}


\begin{definition}
The volume of a $k\times l$ submatrix $W_{\mathcal I,\mathcal J}$
of a matrix $W$ is $h$-{\em maximal}  
over all $k\times l$ submatrices if
it is maximal up to a factor of $h$ for $h>1$.
The volume of a submatrix $W_{\mathcal I,\mathcal J}$ is 
{\em locally} $h$-maximal if it is $h$-maximal over all $k\times l$ submatrices of $W$
that differ from the  submatrix $W_{\mathcal I,\mathcal J}$ by 
a single column and/or a single row.
\end{definition} 
 
 \begin{definition}\label{defmxv1}  
The volume of $W_{\mathcal I,\mathcal J}$ is {\em column-wise} (resp.  
{\em row-wise}) $h$-maximal if it is $h$-maximal in the submatrix 
$W_{\mathcal I,:}$ (resp. $W_{:,\mathcal J})$. 
The volume of a submatrix $W_{\mathcal I,\mathcal J}$ is 
 {\em column-wise} (resp. 
{\em row-wise}) {\em locally} $h$-maximal if it is $h$-maximal over all submatrices of $W$
that differ from the  submatrix $W_{\mathcal I,\mathcal J}$ by 
a single column  (resp. single row).
Call volume  $(h_c,h_r)$-{\em maximal}
 if it is  both
 column-wise $h_c$-maximal and row-wise $h_r$-maximal.
Likewise  define {\em locally}  $(h_c,h_r)$-{\em maximal} volume.
Write {\em maximal} instead of $1$-maximal and $(1,1)$-maximal in these definitions. 
Extend all of them to $r$-projective volumes.
\end{definition}

 For a $k\times l$ matrix  $M=(m_{ij})_{i,j=1, 1}^{k, l}$
write ${\bf m}_j:=(m_{ij})_{i=1}^k$ and  ${\bf\bar m}_i:=((m_{ij})_{j=1}^l)^*$ for all $i$ and $j$. For $k=l=r$
 recall  {\em Hadamard's bound}
 
$v_2(M)=|\det(M)|\le \min~\{\prod_{j=1}^r||{\bf m}_j||,~
\prod_{i=1}^r||{\bf \bar m^*}_j||,~r^{r/2}\max_{i,j=1}^r |m_{ij}|^r\}.$

 
\subsection{The impact of volume maximization on CUR LRA}\label{simpvlmxmz1}
 
 
The estimates of the two following theorems in the Chebyshev matrix norm $||\cdot||_C$ 
increased by  a factor of
 $\sqrt {mn}$ turn into  estimates in the Frobenius  norm $||\cdot||_F$ 
 (see (\ref{eq0})).

\begin{theorem}\label{th12}   \cite{OZ18}.\footnote{The theorem first appeared in \cite[Corollary 2.3]{GT01} in the  special case where $k=l=r$ and $m=n$.}
Suppose that $r:=\min\{k,l\}$, 
 $W_{\mathcal I,\mathcal J}$
is the $k\times l$ CUR generator, 
$U=W_{\mathcal I,\mathcal J}^+$  is the
nucleus  of
a canonical
 CUR LRA of an $m\times n$ matrix $W$, $E=W-CUR$,  $h\ge 1$,
 and the volume of
$W_{\mathcal I,\mathcal J}$ is locally  $h$-maximal, that is,  
  $$h~v_{2}(W_{\mathcal I,\mathcal J})=\max_B v_2(B)$$
where   the maximum is over all $k\times l$  submatrices $B$
 of the matrix $W$ 
 that differ from $W_{\mathcal I,\mathcal J}$ in at most one row and/or column. 
Then  
$$||E||_C\le h~f(k,l)~\sigma_{r+1}(W)~~{\rm for}~~
f(k,l):=\sqrt{\frac{(k+1)(l+1)}{|l-k|+1}}.
$$ 
\end{theorem}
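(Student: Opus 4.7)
The plan is to prove an entrywise bound $|e_{ij}| \le h\,f(k,l)\,\sigma_{r+1}(W)$, since $\|E\|_C$ is just the largest such entry. The canonical CUR construction with $U = W_{\mathcal{I},\mathcal{J}}^+$ and full-rank intersection block makes $CUR$ agree with $W$ on the rows indexed by $\mathcal{I}$ and the columns indexed by $\mathcal{J}$, so $e_{ij}=0$ whenever $i\in \mathcal{I}$ or $j\in \mathcal{J}$; only indices $(i,j)$ with $i\notin\mathcal{I}$, $j\notin\mathcal{J}$ are nontrivial, and I fix one.

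For such an entry I would introduce the augmented $(k+1)\times(l+1)$ submatrix $B := W_{\mathcal{I}\cup\{i\},\,\mathcal{J}\cup\{j\}}$ and recognize $e_{ij} = w_{ij} - W_{\{i\},\mathcal{J}}\,W_{\mathcal{I},\mathcal{J}}^+\,W_{\mathcal{I},\{j\}}$ as a generalized Schur complement of $B$. In the square case $k=l=r$ this gives the Cramer-type identity $|e_{ij}| = |\det B|/|\det W_{\mathcal{I},\mathcal{J}}|$; for rectangular $k\ne l$, the analogue, obtained from compact SVDs of the block factors and Lemma 2.1, expresses $|e_{ij}|$ as a bounded multiple of $v_{2,r+1}(B)/v_{2,r}(W_{\mathcal{I},\mathcal{J}})$.

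Next I would control the numerator. Submultiplicativity $v_{2,r+1}(B)\le \sigma_{r+1}(B)\,v_{2,r}(B)$ together with Cauchy interlacing $\sigma_{r+1}(B)\le \sigma_{r+1}(W)$ reduces the task to comparing $v_{2,r}(B)$ with $v_{2,r}(W_{\mathcal{I},\mathcal{J}})$. I would expand $v_{2,r}(B)^2 = \det(BB^*)$ (or $\det(B^*B)$, depending on orientation) by Cauchy--Binet as a sum of squared $r\times r$ minors of $B$; each such minor is a factor of the volume of a $k\times l$ submatrix of $W$ that differs from $W_{\mathcal{I},\mathcal{J}}$ in at most one row and/or one column, and local $h$-maximality bounds every such volume by $h\,v_2(W_{\mathcal{I},\mathcal{J}})$. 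Dividing, invoking Hadamard's inequality to close the ratio, and counting the Cauchy--Binet terms yields
\[
|e_{ij}| \;\le\; h\,\sigma_{r+1}(W)\,\sqrt{\tfrac{(k+1)(l+1)}{|l-k|+1}},
\]
which is the claimed Chebyshev-norm bound.

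The delicate step is the combinatorial counting. A naive Cauchy--Binet yields only the crude factor $\sqrt{(k+1)(l+1)}$, and obtaining the sharper denominator $|l-k|+1$ requires exploiting a redundancy specific to the rectangular case: among the $\binom{k+1}{r}\binom{l+1}{r}$ many $r$-minors of $B$, there are groups of size proportional to $|l-k|+1$ produced by symmetries in the long dimension that contribute identically up to sign. Packaging this gain carefully, and simultaneously tracking the Hadamard-type factor coming from the columns or rows added to $W_{\mathcal{I},\mathcal{J}}$ to form $B$, is the main technical hurdle and is exactly where the precise constant $f(k,l)$ emerges. Once that counting is settled, the remaining estimates are immediate consequences of Lemma 2.1 and the singular-value interlacing already available.
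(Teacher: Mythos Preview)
The paper does not prove this theorem; it is quoted from \cite{OZ18}, with Remark~\ref{re3} noting that the argument there already supports the extension from global to local $h$-maximality. There is thus no in-paper proof to compare against.

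On its own merits your sketch has genuine gaps beyond the admitted combinatorial lacuna. The opening reduction fails in the rectangular case: if $k<l$ (so $r=k$) then $W_{\mathcal I,\mathcal J}^{+}W_{\mathcal I,\mathcal J}$ is an $l\times l$ orthogonal projector of rank $k$, not the identity, so $(CUR)_{:,\mathcal J}\ne W_{:,\mathcal J}$ in general and $e_{ij}$ need not vanish for $j\in\mathcal J$; only the row-side cancellation survives. The identity you write as $v_{2,r}(B)^2=\det(BB^*)$ is also wrong for the $(k{+}1)\times(l{+}1)$ block $B$ when $k<l$, since then $\det(BB^*)=v_{2,k+1}(B)^2=v_{2,r+1}(B)^2$, so the Cauchy--Binet expansion you invoke does not isolate $v_{2,r}(B)$. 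The asserted link ``each $r\times r$ minor of $B$ is a factor of the volume of a $k\times l$ submatrix of $W$ differing from $W_{\mathcal I,\mathcal J}$ in at most one row and/or column'' is unsubstantiated: an $r\times r$ minor of $B$ selects $r$ rows and $r$ columns from a $(k{+}1)\times(l{+}1)$ block and has no evident relation to the $k\times l$ volumes that local $h$-maximality actually controls. And the denominator $|l-k|+1$---which is precisely the refinement of \cite{OZ18} over the square-case bound of \cite{GT01}---is left open. Your outline is essentially the classical Goreinov--Tyrtyshnikov argument when $k=l=r$ and is sound there; the rectangular extension, however, requires a genuinely different mechanism that the proposal does not supply.
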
     

\begin{theorem}\label{th3}  \cite{OZ18}.
Suppose that  
 $W_{k,l}=W_{\mathcal I,\mathcal J}$
is a $k\times l$ submatrix of 
 an $m\times n$ matrix $W$,
 $U=W_{k,l,r}^+$ 
 is the nucleus of
a canonical CUR LRA of $W$, $E=W-CUR$, $h\ge 1$,
 and the $r$-projective volume of
$W_{\mathcal I,\mathcal J}$ is  locally  $h$-maximal, that is, 
  $$h~v_{2,r}(W_{\mathcal I,\mathcal J})=\max_B v_{2,r}(B)$$                                                                                                                                                                                                                                                                                                                                                                                                                                                                                                                             
 where   the maximum is over all $k\times l$ submatrices $B$
 of the matrix $W$  that differ from $W_{\mathcal I,\mathcal J}$ in at most one row and/or column.  
Then 
$$||E||_C\le h~f(k,l,r)~\sigma_{r+1}(W)
~~{\rm for}~~f(k,l,r):=
\sqrt{\frac{(k+1)(l+1)}{(k-r+1)(l-r+1)}}. 
$$
\end{theorem}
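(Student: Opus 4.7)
The plan is to prove the bound entry-by-entry, since $\|E\|_C=\max_{i,j}|E_{ij}|$. Fix indices $i\notin\mathcal I$ and $j\notin\mathcal J$ and form the augmented $(k+1)\times(l+1)$ submatrix $\widetilde W:=W_{\mathcal I\cup\{i\},\,\mathcal J\cup\{j\}}$. A direct computation shows that the canonical CUR residual at $(i,j)$ is a local quantity: it depends only on $\widetilde W$ and on the rank-$r$ truncation of its $k\times l$ corner $W_{\mathcal I,\mathcal J}$. The task therefore reduces to bounding this local residual by $h\,f(k,l,r)\,\sigma_{r+1}(\widetilde W)$, after which the Weyl interlacing inequality $\sigma_{r+1}(\widetilde W)\le\sigma_{r+1}(W)$ finishes the job.

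For the second step I would produce an explicit formula for $E_{ij}$ in terms of projective volumes. Substituting the SVD-based split $\widetilde W=\widetilde W_{r}+(\widetilde W-\widetilde W_{r})$ into $E_{ij}=w_{ij}-(CUR)_{ij}$ and using $U=W_{\mathcal I,\mathcal J,r}^{+}$, one expects to obtain an identity of the form $E_{ij}=\sigma_{r+1}(\widetilde W)\cdot\rho_{ij}$, in which $\rho_{ij}$ is a quotient whose denominator is $v_{2,r}(W_{\mathcal I,\mathcal J})$ and whose numerator is an $\ell^2$-type combination of $r$-projective volumes of the $k\times l$ sub-blocks of $\widetilde W$ that differ from $W_{\mathcal I,\mathcal J}$ in exactly one row (replacing some row of $\mathcal I$ by the extra row $i$) or in exactly one column (replacing some column of $\mathcal J$ by the extra column $j$). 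Local $h$-maximality of $v_{2,r}(W_{\mathcal I,\mathcal J})$ applies to precisely this class of neighbors, so each of the neighbor volumes is at most $h\,v_{2,r}(W_{\mathcal I,\mathcal J})$ and every ratio inside $\rho_{ij}$ is at most $h$.

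The combinatorial constant $f(k,l,r)=\sqrt{(k+1)(l+1)/((k-r+1)(l-r+1))}$ would then follow from Cauchy--Schwarz together with a count of neighbors: a Hadamard-type $r$-projective expansion produces $(k+1)(l+1)$ squared volume ratios in $\rho_{ij}^2$, while the use of the rank-$r$ pseudoinverse furnishes the normalizing factors $1/(k-r+1)$ and $1/(l-r+1)$ arising from the ratios $\binom{k+1}{r}/\binom{k}{r}$ and $\binom{l+1}{r}/\binom{l}{r}$ between the subsets indexing $v_{2,r}(\widetilde W)$ and $v_{2,r}(W_{\mathcal I,\mathcal J})$. Collecting the bound over all off-block $(i,j)$ then delivers the theorem.

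The main obstacle is the derivation of the identity $E_{ij}=\sigma_{r+1}(\widetilde W)\cdot\rho_{ij}$ in precisely the form that lets only one-row/one-column neighbor volumes appear, and that yields an $r$-projective (rather than ordinary) volume in the denominator. Here the choice of the nucleus as the pseudoinverse of the \emph{rank-$r$ truncation} of the generator, rather than of the generator itself, is essential: it collapses all but the top $r$ singular directions and thereby makes the bound meaningful for $r<\min\{k,l\}$, paralleling how Theorem~\ref{th12} exploits maximal ordinary volume in the case $r=\min\{k,l\}$. Carrying out this algebraic reduction cleanly is the technical core of the argument, and it is where I would need to follow the SVD-adapted derivation of \cite{OZ18} rather than the elementary determinantal manipulations that suffice for Theorem~\ref{th12}.
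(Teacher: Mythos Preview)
The paper does not supply its own proof of this theorem: it is quoted from \cite{OZ18}, with Remark~\ref{re3} merely observing that the argument in \cite{OZ18} already yields the stated local (as opposed to global) $h$-maximality version. So there is no in-paper proof to compare against; what can be assessed is whether your sketch plausibly tracks the \cite{OZ18} argument.

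At the structural level it does. The reduction to the $(k+1)\times(l+1)$ window $\widetilde W=W_{\mathcal I\cup\{i\},\mathcal J\cup\{j\}}$ is exactly right: since $(CUR)_{ij}=W_{i,\mathcal J}\,W_{k,l,r}^{+}\,W_{\mathcal I,j}$, the residual $E_{ij}$ depends only on entries of $\widetilde W$, and the local $h$-maximality hypothesis restricts faithfully to the $k\times l$ sub-blocks of $\widetilde W$. Passing from $\sigma_{r+1}(\widetilde W)$ to $\sigma_{r+1}(W)$ by interlacing is also how \cite{OZ18} closes the loop. Two points deserve tightening. First, you treat only $i\notin\mathcal I$, $j\notin\mathcal J$; because $U$ is the pseudoinverse of the rank-$r$ \emph{truncation} rather than of $W_{\mathcal I,\mathcal J}$ itself, $CUR$ does not reproduce $W$ exactly on the generator's own rows and columns, so those entries must be covered as well (they are, by the same windowing with $i\in\mathcal I$ or $j\in\mathcal J$, but this should be said). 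Second, the displayed ``identity'' $E_{ij}=\sigma_{r+1}(\widetilde W)\cdot\rho_{ij}$ and the binomial-ratio heuristic for $f(k,l,r)$ are more wishful than what \cite{OZ18} actually does: there one obtains an \emph{inequality} after an SVD-based change of coordinates and a careful estimate of how the rank-$r$ projection interacts with single-row/column swaps, and the factors $(k-r+1)^{-1}$, $(l-r+1)^{-1}$ emerge from bounding sums of squared projective-volume ratios rather than from a direct count of subsets. Your outline is a fair roadmap, but the algebraic core you flag as ``the main obstacle'' is indeed where all the work lies, and it is not an identity of the clean form you posit.
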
  



 \begin{remark}\label{re3}
Theorems \ref{th12} and \ref{th3} have been stated in  \cite{OZ18} under assumptions  
that the matrix $W_{\mathcal I,\mathcal J}$ has (globally) $h$-maximal volume or 
 $r$-projective volume, respectively, but their proofs in \cite{OZ18} support the 
  above extensions to the case of locally maximal volume and $r$-projective volume. 
\end{remark}

 
\subsection{The volume and $r$-projective volume of a perturbed matrix}\label{svlmprtrb}

   
\begin{theorem}\label{thprtrbvlm} 
Suppose that 
$W'$ and $E$ are $k\times l$ 
matrices, 
$\rank(W')=r\le\min\{k,l\}$,
$W=W'+E$, and   
$||E||\le \epsilon$. Then 
\begin{equation}\label{eqrts}
\Big (1-\frac{\epsilon}{\sigma_r(W)}\Big )^{r}
\le \prod_{j=1}^r\Big (1-\frac{\epsilon}{\sigma_j(W)}\Big ) 
\le\frac{v_{2,r}(W)}{v_{2,r}(W')}\le
 \prod_{j=1}^r\Big (1+\frac{\epsilon}{\sigma_j(W)}\Big )\le   
\Big (1+\frac{\epsilon}{\sigma_r(W)}\Big )^r. 
\end{equation}    
If $\min\{k,l\}=r$, then $v_2(W)=v_{2,r}(W)$,  $v_2(W')=v_{2,r}(W')$, and 

\begin{equation}\label{eqrts1}
\Big (1-\frac{\epsilon}{\sigma_r(W)}\Big )^{r}
\le\frac{v_2(W)}{v_2(W')}=
\frac{v_{2,r}(W)}{v_{2,r}(W')}\le
\Big (1+\frac{\epsilon}{\sigma_r(W)}\Big )^r.
\end{equation}
\end{theorem}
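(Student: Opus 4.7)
The proof plan is to reduce the claim to Weyl's perturbation inequality for singular values, which guarantees $|\sigma_j(W) - \sigma_j(W')| \le \|W - W'\|_2 \le \epsilon$ for every index $j$. This is essentially the only nontrivial analytic ingredient; the rest is algebraic bookkeeping.

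First, under the nondegenerate assumption $\sigma_r(W) > \epsilon$ (otherwise the leftmost member of \eqref{eqrts} is nonpositive and the inequality is trivial), I would divide the Weyl bound by $\sigma_j(W) > 0$ to obtain, for every $j = 1, \dots, r$,
$$1 - \frac{\epsilon}{\sigma_j(W)} \;\le\; \frac{\sigma_j(W')}{\sigma_j(W)} \;\le\; 1 + \frac{\epsilon}{\sigma_j(W)}.$$
All factors are then nonnegative, so multiplying over $j = 1, \dots, r$ preserves the direction, and by the definition of the $r$-projective volume the product of the middle quantities equals $v_{2,r}(W')/v_{2,r}(W)$. This delivers the two inner inequalities of \eqref{eqrts}. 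Since the hypothesis $\|E\| \le \epsilon$ is symmetric in $W$ and $W' = W - E$, the same chain carries over when the ratio is inverted as in the statement, by swapping which matrix plays the role of the base.

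The two outer inequalities of \eqref{eqrts} will then follow from the ordering $\sigma_j(W) \ge \sigma_r(W)$ for $j \le r$, which gives $\epsilon/\sigma_j(W) \le \epsilon/\sigma_r(W)$; since $1 - x$ is decreasing and $1 + x$ is increasing in $x$, the bounds on the product $\prod_{j=1}^r(1 \pm \epsilon/\sigma_j(W))$ follow termwise.

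Finally, for \eqref{eqrts1} the hypothesis $\min\{k, l\} = r$ implies that both $W$ and $W'$ have at most $r$ singular values in total, so $v_2(W) = \prod_{j=1}^r \sigma_j(W) = v_{2,r}(W)$ and likewise $v_2(W') = v_{2,r}(W')$. Consequently \eqref{eqrts1} is just \eqref{eqrts} transcribed into this notation. The only real obstacle is keeping track of the degenerate regime $\sigma_r(W) \le \epsilon$, where the lower bound becomes nonpositive and must be disposed of separately (trivially); everywhere else the argument is a direct multiplication of per-coordinate Weyl inequalities.
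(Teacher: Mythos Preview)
Your approach is exactly the paper's: its entire proof of \eqref{eqrts} is the single sentence ``a perturbation of a matrix within a norm bound $\epsilon$ changes its singular values by at most $\epsilon$'' (Weyl's inequality, \cite[Corollary 8.6.2]{GL13}), and your reduction of \eqref{eqrts1} to \eqref{eqrts} via $v_2=v_{2,r}$ when $\min\{k,l\}=r$ is likewise identical.

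One remark on bookkeeping: your chain in fact yields
\[
\prod_{j=1}^r\Big(1-\frac{\epsilon}{\sigma_j(W)}\Big)\;\le\;\frac{v_{2,r}(W')}{v_{2,r}(W)}\;\le\;\prod_{j=1}^r\Big(1+\frac{\epsilon}{\sigma_j(W)}\Big),
\]
and your ``symmetry'' step to flip the ratio does not work as written, since swapping the roles of $W$ and $W'$ would produce bounds in terms of $\sigma_j(W')$ rather than $\sigma_j(W)$. In fact the orientation $v_{2,r}(W)/v_{2,r}(W')$ printed in the statement is false in general (take $k=l=r=1$, $W'=1$, $E=1/2$: the ratio is $3/2$ while $1+\epsilon/\sigma_1(W)=4/3$). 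The correct inequality is the one you actually derived, so the discrepancy is a typo in the displayed statement rather than a gap in your argument; the paper's one-line proof does not resolve the orientation either.
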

\begin{proof} 
Bounds (\ref{eqrts}) follow because a perturbation of a matrix within 
a norm bound $\epsilon$ changes its singular values by at most $\epsilon$ (see 
\cite[Corollary 8.6.2]{GL13}).
Bounds (\ref{eqrts1}) follow because
$v_2(M)=v_{2,r}(M)=\prod_{j=1}^r\sigma_j(M)$
for any $k\times l$ matrix $M$ with $\min\{k,l\}=r$, in particular 
for $M=W'$ and $M=W=W'+E$.
\end{proof}

If the ratio 
$\frac{\epsilon}{\sigma_r(W)}$ is small,
then $\Big (1-\frac{\epsilon}{\sigma_r(W)}\Big )^{r}=1-O\Big (\frac{r\epsilon}{\sigma_r(W)}\Big )$ and  
$\Big (1+\frac{\epsilon}{\sigma_r(W)}\Big )^r=
1+O\Big (\frac{r\epsilon}{\sigma_r(W)}\Big )$,
which shows that the relative perturbation of
the  volume is amplified by at most a factor of $r$ in comparison to 
the relative perturbation of the $r$ largest singular values.

 
\subsection{The volume and $r$-projective volume of a matrix product}\label{svlmprd}


\begin{theorem}\label{thvolfctrg} 
{\rm [Examples \ref{ex1} and  \ref{ex2} below show some limitations on the extension of this theorem.]}
Suppose that $W=GH$ for an $m\times q$ matrix $G$ and a $q\times n$
matrix $H$.
Then 

(i) $v_2(W)=v_2(G)v_2(H)$
if $q=\min\{m,n\}$; 
$v_2(W)=0\le v_2(G)v_2(H)$ if 
$q<\min\{m,n\}$.

(ii) $v_{2,r}(W)\le v_{2,r}(G)v_{2,r}(H)$ for $1\le r\le q$, 

 (iii) $v_2(W)\le v_2(G)v_2(H)$ if $m=n\le q$.
\end{theorem}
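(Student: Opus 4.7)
The plan is to treat the three parts in order, with (iii) emerging as an immediate consequence of (ii).

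For part (i), I would split on which factor is square. If $q = m \le n$, then $G$ is $m \times m$, so $WW^* = G(HH^*)G^*$ is itself $m \times m$, and multiplicativity of the determinant gives
$$v_2(W)^2 = \det(WW^*) = |\det(G)|^2\, \det(HH^*) = v_2(G)^2\, v_2(H)^2.$$
The symmetric case $q = n \le m$ is handled identically via $W^*W = H^*(G^*G)H$. If instead $q < \min\{m,n\}$, then $\rank(W) \le q < \min\{m,n\}$, so at least one factor in $\prod_{j=1}^{\min\{m,n\}} \sigma_j(W) = v_2(W)$ vanishes, giving $v_2(W) = 0$; the right-hand side is plainly nonnegative.

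For part (ii), I would pass to $r$-fold exterior powers (equivalently, $r$-th compound matrices). The key identity is $\prod_{j=1}^r \sigma_j(M) = ||\wedge^r M||$: the singular values of $\wedge^r M$ are the products of $r$-element subsets of the singular values of $M$, so the operator norm of $\wedge^r M$ equals the product of the $r$ largest. Combining this with the functoriality $\wedge^r(GH) = (\wedge^r G)(\wedge^r H)$ and submultiplicativity of the spectral norm yields
$$v_{2,r}(W) = ||\wedge^r(GH)|| \le ||\wedge^r G||\cdot ||\wedge^r H|| = v_{2,r}(G)\, v_{2,r}(H),$$
which is Horn's classical singular-value-product inequality.

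For part (iii), the hypothesis $m = n \le q$ makes $W$ an $n \times n$ matrix and, via Definition \ref{defmxv}, forces $v_2(G) = v_{2,n}(G)$, $v_2(H) = v_{2,n}(H)$, and $v_2(W) = v_{2,n}(W)$; thus (iii) is the $r = n$ specialization of (ii). The only step with genuine content is the exterior-power identity used in (ii); if one prefers to avoid exterior algebra, a self-contained route to (iii) alone combines Cauchy--Binet with Cauchy--Schwarz: expanding $\det(GH) = \sum_S \det(G_{:,S})\det(H_{S,:})$ over $n$-element subsets $S$ of $\{1,\dots,q\}$ and bounding $|\det(GH)|^2 \le \bigl(\sum_S |\det(G_{:,S})|^2\bigr)\bigl(\sum_S |\det(H_{S,:})|^2\bigr) = \det(GG^*)\det(H^*H)$ gives exactly $v_2(W)^2 \le v_2(G)^2\, v_2(H)^2$.
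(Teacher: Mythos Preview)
Your proof is correct and takes a genuinely different route from the paper, especially in part (ii). The paper proves (i) via SVD: writing $G = S_G \Sigma_G T_G^*$ and $H = S_H \Sigma_H T_H^*$, it reduces to showing that $V := \Sigma_G (T_G^* S_H) \Sigma_H$ satisfies $v_2(V) = v_2(G)\,v_2(H)$ because $T_G^* S_H$ is square orthogonal, and then observes that $W$ shares the singular values of $V$. For (ii) the paper first treats the case $q \le \min\{m,n\}$ by taking the $r\times r$ leading block $\Sigma_{r,V}$ of $\Sigma_V$, factoring it as $\widehat G\,\widehat H$ where $\widehat G$ and $\widehat H$ are submatrices of orthogonal transforms of $\Sigma_G$ and $\Sigma_H$, invoking (i) on that factorization, and using that singular values of a submatrix are dominated by those of the full matrix; the general $q$ is then handled by zero-padding $G$ and $H$ to reduce to the previous case. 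Your exterior-power argument bypasses all of this bookkeeping: the identity $v_{2,r}(M) = \|\wedge^r M\|$ together with functoriality $\wedge^r(GH) = (\wedge^r G)(\wedge^r H)$ and spectral-norm submultiplicativity gives (ii) in one line, and your Cauchy--Binet/Cauchy--Schwarz alternative for (iii) is a pleasant self-contained bonus. The paper's approach is more elementary in the sense that it uses only SVD and basic singular-value interlacing, while yours is cleaner but assumes familiarity with compound matrices; both establish the result, and yours makes transparent that (ii) is exactly Horn's product inequality for singular values.
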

The theorem is proved in \cite{OZ18}.
We present an alternative proof.
\begin{example}\label{ex1}
If $G$ and $H$ are orthogonal matrices and if $GH=O$, 
then $v_2(G)=v_2(H)=v_{2,r}(G)=v_{2,r}(H)=1$ and 
$v_2(GH)=v_{2,r}(GH)=0$ for all $r\le q$.
\end{example}

\begin{example}\label{ex2}
If $G=(1~|~0)$ and $H=\diag(1,0)$, 
then $v_2(G)=v_2(GH)=1$ and 
$v_2(H)=0$.
\end{example}

\begin{proof}

First prove claim (i). 

Let $G=S_G\Sigma_GT^*_G$ and $H=S_H\Sigma_HT^*_H$ be SVDs
such that $\Sigma_G$, $T^*_G$, $S_H$, $\Sigma_H$,  and $U=T^*_GS_H$ 
are $q\times q$ matrices and
$S_G$, $T^*_G$, $S_H$, $T^*_H$,  and $U$ are orthogonal matrices.

Write $V:=\Sigma_GU\Sigma_H$. Notice that 
$\det (V)=\det(\Sigma_G)\det(U)\det(\Sigma_H)$. Furthermore
 $|\det(U)|=1$  because 
$U$ is a square orthogonal matrix.
Hence $v_2(V)=|\det (V)|=|\det(\Sigma_G)\det(\Sigma_H)|=v_2(G)v_2(H)$.

Now let $V=S_V\Sigma_VT_V^*$ be SVD where $S_V$, $\Sigma_V$, and $T_V^*$ 
are $q\times q$ matrices and where $S_V$ and $T_V^*$ are orthogonal matrices.

Observe that $W=S_GVT^*_H=S_GS_V\Sigma_VT_V^*T^*_H
=S_W \Sigma_VT^*_W$ where $S_W=S_GS_V$ and $T^*_W=T_V^*T^*_H$
are orthogonal matrices. 
Consequently
$W=S_W \Sigma_VT^*_W$ is SVD, and so $\Sigma_W=\Sigma_V$.

Therefore $v_2(W)=v_2(V)=v_2(G)v_2(H)$
unless $q<\min\{m,n\}$.
This proves claim (i) because clearly 
$v_2(W)=0$ if $q<\min\{m,n\}$.

Next prove claim (ii).

First assume that $q\le \min\{m,n\}$ as in claim (i)
and let $W=S_W \Sigma_WT^*_W$ be SVD.

In this case we have proven that
 $\Sigma_W=\Sigma_V$ for $V=\Sigma_GU\Sigma_H$, 
$q\times q$ diagonal matrices $\Sigma_G$ and $\Sigma_H$,
and a $q\times q$ orthogonal matrix $U$.
Consequently $v_{2,r}(W)=v_{2,r}(\Sigma_V)$.

In order to prove claim (ii) in the case where
$q\le \min\{m,n\}$, it remains to deduce that
\begin{equation}\label{eqghv}
v_{2,r}(\Sigma_{V})\le v_{2,r}(G) v_{2,r}(H).
\end{equation} 

Notice that
$\Sigma_V=S_V^*VT_V=S_V^*\Sigma_GU\Sigma_HT_V$
for $q\times q$ orthogonal matrices $S_V^*$ and $H_V$.

Let $\Sigma_{r,V}$ denote the $r\times r$
leading submatrix of $\Sigma_V$, and so
$\Sigma_{r,V}=\widehat G \widehat H$
where
$\widehat G:=S_{r,V}^*\Sigma_GU$ and $\widehat H:=\Sigma_HT_{r,V}$ and
where $S_{r,V}$ and $T_{r,V}$ denote the 
$r\times q$ leftmost orthogonal submatrices 
of the matrices $S_V$
and $T_V$, respectively.

Observe that $\sigma_j(\widehat G)\le \sigma_j(G)$ for 
all $j$ because $\widehat G$ is a submatrix of the $q\times q$ matrix $S_{V}^*\Sigma_GU$,
and similarly 
$\sigma_j(\widehat H)\le \sigma_j(H)$ for all $j$. Therefore
$v_{2,r}(\widehat G)=v_{2}(\widehat G)\le v_{2,r}(G)$
and $v_{2,r}(\widehat H)=v_{2}(\widehat H)\le v_{2,r}(H)$.
Also notice that 
$v_{2,r}(\Sigma_{r,V})=v_{2}(\Sigma_{r,V})$.

Furthermore $v_{2}(\Sigma_{r,V})\le 
v_{2}(\widehat G)v_{2}(\widehat H)$
by virtue of claim (i) because 
$\Sigma_{r,V}=\widehat G \widehat H$.

Combine the latter relationships and obtain (\ref{eqghv}), 
which implies claim (ii) in the case where
$q\le \min\{m,n\}$.

Next we extend claim (ii)  to the general case of any  positive 
integer $q$.

Embed a matrix $H$ into a $q\times q$  matrix $H':=(H~|~O)$
banded by zeros if $q>n$. Otherwise write $H':=H$.
 Likewise 
embed a matrix $G$ into a $q\times q$  matrix $G':=(G^T~|~O)^T$
banded by zeros if $q>m$. Otherwise write $G':=G$.

Apply claim (ii) to the $m'\times q$ matrix $G'$ and $q\times n'$ matrix
 $H'$ where $q\le \min\{m',n'\}$. 

Obtain that $v_{2,r}(G'H')\le v_{2,r}(G')v_{2,r}(H')$.

Substitute equations 
$$v_{2,r}(G')=v_{2,r}(G),~v_{2,r}(H')=v_{2,r}(H),~{\rm  and}~ 
v_{2,r}(G'H')=v_{2,r}(GH),$$
which hold because the embedding 
keeps invariant the singular values
and therefore keeps invariant the  volumes  of the matrices 
$G$, $H$,  and $GH$. 

This completes the  proof of claim (ii), which
implies claim (iii) because  
$v_2(V)=v_{2,n}(V)$ if $V$ stands for $G$, $H$, or $GH$
and if $m=n\le q$.
\end{proof}

\section{C--A iterations}\label{scait}
 

C--A iterations recursively apply
two auxiliary Sub-algorithms $\mathcal A$ and $\mathcal B$ (see Algorithm \ref{algdomin}). See the customary recipes for the initialization  of these iterations in
  \cite{B00}, \cite{GOSTZ10}, and  \cite{B11}. 

{\bf Sub-algorithm $\mathcal A$.} Given a 4-tuple of integers $k$, $l$,
$p$, and $q$ such that 
$r\le k\le p$ and $r\le l\le q$, 
Sub-algorithm $\mathcal A$ is applied to  a $p\times q$ matrix and computes its $k\times l$
 submatrix 
whose volume or projective volume 
is maximal up to a fixed factor $h\ge 1$ over all its $k\times l$ submatrices. 
  In Figure \ref{fig4}, reproduced  from \cite{PLSZ19}, we show three C--A iterations in the simple the case where 
$k=l=p=q=r$.
 
 \begin{figure}[ht]
\centering
\includegraphics[scale=0.25]{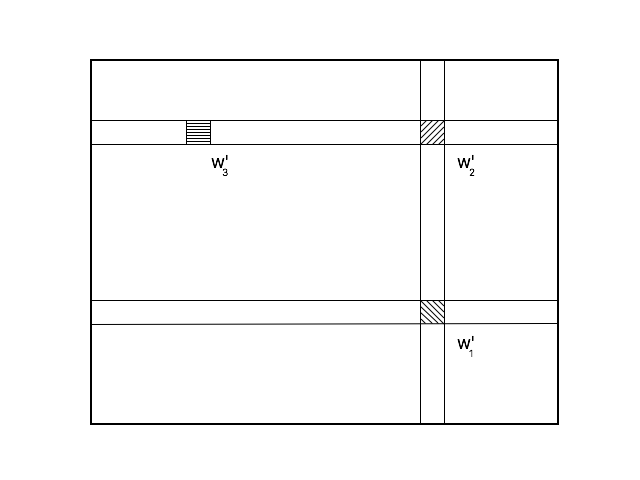}
\caption{The three successive C--A steps output three
striped matrices.}
\label{fig4}
\end{figure}  

{\bf Sub-algorithm $\mathcal B$.} This 
  sub-algorithm verifies whether the  error norm of the CUR LRA built on a fixed CUR                                                                 generator is within a fixed tolerance $\tau$
(see \cite{LPa} for some heuristic recipes for  verification at sublinear cost).


\begin{algorithm}[t]
\caption{C--A Iterations}\label{algdomin}
\begin{quote}
    {\bf Input: } $W\in \mathbb C^{m\times n}$, four positive integers  $r, k, l$, and ITER; a number $\tau>0$.\\
    \smallskip
    {\bf Output: } A CUR LRA of $W$ with an error norm at most $\tau$
    or FAILURE.
    \begin{algorithmic}
    \STATE {\bf Initialization: } Fix a submatrix $W_0$ made up of $l$ columns of $W$ and obtain an initial set $\mathcal{I}_0$.\\
    \smallskip
    {\bf Computations:}\\
    \FOR{$i = 1, 2, \dots, \textrm{ ITER}$}
        \IF{$i$ is even}
            \STATE {\bf "Horizontal"} C--A step:
            \STATE 1. Let $R_i:=W_{\mathcal I_{i-1},:}$ be a $p\times n$  submatrix of $W$.
            \STATE 2. Apply Sub-algorithm $\mathcal{A}$ for $q=n$  to $R_i$ and obtain  a $k\times l$ submatrix $W_{i} = W_{\mathcal I_{i-1}, \mathcal J_{i}}$.
        \ELSE
            \STATE {\bf "Vertical"} C--A step:
            \STATE 1. Let $C_i:=W_{:, \mathcal J_{i-1}}$ be an $m\times q$  submatrix of $W$.
            \STATE 2. Apply Sub-algorithm $\mathcal{A}$ for $p=m$ to $C_i$ and obtain a $k\times l$ submatrix $W_{i} = W_{\mathcal I_{i}, \mathcal J_{i-1}}$.
        \ENDIF
        \smallskip
        \STATE Apply Sub-algorithm $\mathcal B$ and obtain $E$, the error bound of CUR LRA built on the generator $W_i$.
        \IF{$E \le \tau$}
            \RETURN CUR LRA built on the generator $W_i$.
        \ENDIF
    \ENDFOR
    \RETURN {\bf Failure}
\end{algorithmic}
\end{quote}

\end{algorithm}

\bigskip
 
{\bf \Large PART II. CUR LRA FOR SPSD MATRICES}

\section{CUR LRA of SPSD Matrices: Main Results}\label{smnthr}


We are going to 
devise  an algorithm
to which we refer as the {\em Main Algorithm} and Algorithm \ref{alg:main_alg}
and and which computes  CUR LRA of an SPSD matrix at sublinear cost.


\begin{theorem}{{\bf (Main Result 1)}}\label{spsd:main_thm1}
 Suppose that $W\in\mathbb{R}^{n\times n}$ is an SPSD matrix, $r$ and $n$ are two positive integers, $r<n$,  $\epsilon$ is a  positive number, and $\mathcal{I}$ is the output of Algorithm \ref{alg:main_alg} (Main Algorithm). Write $C:=W_{:, \mathcal{I}}$, 
$U:=W_{\mathcal{I}, \mathcal{I}}^{-1}$, and $R:=W_{\mathcal{I}, :}$. Then
\begin{equation}
||W - CUR||_C \le (1+\epsilon)(r+1)\sigma_{r+1}(W).
\end{equation}
Furthermore Algorithm \ref{alg:main_alg} runs at an arithmetic  cost in $O(nr^4\epsilon^{-1}\log r)$
 as 
$\epsilon\rightarrow 0$.
\end{theorem}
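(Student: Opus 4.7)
The plan is to reduce the problem to maximizing volume over principal $r\times r$ submatrices of $W$ — which is where the SPSD hypothesis pays off — and then apply Theorem~\ref{th12} with $k=l=r$, for which $f(r,r)=r+1$.

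\textbf{Step 1 (SPSD reduction to principal submatrices).} Factor $W=XX^\top$ (for instance $X=W^{1/2}$) so that $W_{\mathcal I,\mathcal J}=X_{\mathcal I,:}X_{\mathcal J,:}^\top$ for every pair of $r$-subsets $\mathcal I,\mathcal J\subseteq\{1,\dots,n\}$. Cauchy--Binet (equivalently, the Cauchy--Schwarz inequality for compound matrices) yields
\begin{equation*}
(\det W_{\mathcal I,\mathcal J})^2 \le \det W_{\mathcal I,\mathcal I}\,\det W_{\mathcal J,\mathcal J}.
\end{equation*}
The crucial consequence is this: if $W_{\mathcal I,\mathcal I}$ is $(1+\epsilon)$-locally maximal among \emph{principal} $r\times r$ submatrices (no single-index replacement in $\mathcal I$ grows $\det W_{\mathcal I,\mathcal I}$ by more than a factor $1+\epsilon$), then for any two $r$-subsets $\mathcal I',\mathcal I''$ each differing from $\mathcal I$ in at most one index,
\begin{equation*}
|\det W_{\mathcal I',\mathcal I''}|\le \sqrt{(1+\epsilon)^2}\,\det W_{\mathcal I,\mathcal I}=(1+\epsilon)\det W_{\mathcal I,\mathcal I}.
\end{equation*}
Since volumes of $r\times r$ matrices equal $|\det|$, this upgrades \emph{principal} local $(1+\epsilon)$-maximality to the local $(1+\epsilon)$-maximality required by Definition~\ref{defmxv1}.

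\textbf{Step 2 (Greedy swap algorithm and complexity).} I would describe the Main Algorithm as follows: maintain an $r$-subset $\mathcal I$ together with $W_{\mathcal I,\mathcal I}^{-1}$; at each outer iteration scan the $O(nr)$ candidate single-index swaps $i\in\mathcal I,\ j\notin\mathcal I$ and, via the matrix-determinant lemma applied to the Schur complement, evaluate each swap's determinant ratio in $O(r^2)$ arithmetic, for $O(nr^3)$ per iteration; accept any swap that multiplies $\det W_{\mathcal I,\mathcal I}$ by more than $1+\epsilon$ and Sherman--Morrison update $W_{\mathcal I,\mathcal I}^{-1}$; terminate when no such swap exists. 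A careful initialization (for instance $r$ rounds of pivoted Cholesky on $W$) gives $\mathcal I_0$ with $\det W_{\mathcal I_0,\mathcal I_0}$ within a factor $r^{O(r)}$ of the principal-submatrix optimum, so the determinant can grow by at most $r^{O(r)}$ during the run; since every accepted swap contributes a factor $>1+\epsilon$, the iteration count is $O(r\epsilon^{-1}\log r)$ and the total cost is $O(nr^4\epsilon^{-1}\log r)$.

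\textbf{Step 3 (Error bound).} With the output $\mathcal I$, the triple $C=W_{:,\mathcal I}$, $U=W_{\mathcal I,\mathcal I}^{-1}$, $R=W_{\mathcal I,:}$ is the canonical CUR LRA of Section~\ref{scur}: the generator $W_{\mathcal I,\mathcal I}$ is an invertible $r\times r$ matrix, hence equal to its own rank-$r$ truncation, and the nucleus is $U=W_{\mathcal I,\mathcal I,r}^+=W_{\mathcal I,\mathcal I}^{-1}$. By Step~1 the generator has locally $(1+\epsilon)$-maximal volume in the sense of Definition~\ref{defmxv1}, so Theorem~\ref{th12} applied with $k=l=r$, $h=1+\epsilon$, and $f(r,r)=\sqrt{(r+1)^2/1}=r+1$ delivers
\begin{equation*}
\|W-CUR\|_C \le (1+\epsilon)(r+1)\,\sigma_{r+1}(W).
\end{equation*}

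\textbf{Main obstacle.} The crux is Step~1: it is precisely the SPSD hypothesis, through Cauchy--Binet, that lets a search over row/column index pairs be replaced by a search over a single principal index set — without this the greedy procedure of Step~2 would not be sublinear. A secondary technical point is engineering the initialization so that the volume gap it leaves is $r^{O(r)}$ (not $n^{O(r)}$), which is what forces the $\log r$, rather than $\log n$, factor in the final complexity bound.
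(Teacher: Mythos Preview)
Your proposal is correct and follows essentially the same route as the paper: the SPSD inequality $v_2(W_{\mathcal I,\mathcal J})^2\le v_2(W_{\mathcal I,\mathcal I})v_2(W_{\mathcal J,\mathcal J})$ (the paper's Theorem~\ref{thprncpl}, which you derive via Cauchy--Binet) reduces local maximality to the principal case (Theorem~\ref{thm:principle_locally_max}), GECP/pivoted Cholesky supplies an initial $\mathcal I_0$ within $(r!)^2=r^{O(r)}$ of the principal optimum (Theorem~\ref{thm:guassian_elemination_close_to_max}) so that $O(r\epsilon^{-1}\log r)$ swap rounds suffice (Corollary~\ref{thm:num_termination}), and Theorem~\ref{th12} with $k=l=r$ gives the error bound. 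Your cost accounting via rank-one determinant and inverse updates also matches the paper's $O(nr^3)$ per Index Update and $O(nr^4\epsilon^{-1}\log r)$ overall.
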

\medskip

\begin{theorem}{{\bf (Main Result 2)
}}\label{spsd:main_thm2}
 Suppose that $W\in\mathbb{R}^{n\times n}$ is an SPSD matrix, $r$, $K$ and $n$ are three positive integers such that $r<K<n$,  $\epsilon$ is a a positive number, and $\mathcal{I}$ is the output of Algorithm \ref{alg:main_alg} (Main Algorithm). Write $C:=W_{:, \mathcal{I}}$, 
$U:=(W_{\mathcal{I}, \mathcal{I}})_r^{+}$, and $R:=W_{\mathcal{I}, :}$. Then
\begin{equation}
||W - CUR||_C \le (1+\epsilon)\frac{K+1}{K-r+1}\sigma_{r+1}(W).
\end{equation}
In particular, let $K = cr - 1$ for $c>1$. Then
\begin{equation}
||W - CUR||_C \le (1+\frac{1}{c-1})(1+\epsilon)\sigma_{r+1}(W).
\end{equation}
Furthermore Algorithm \ref{alg:main_alg} runs at an arithmetic  cost in 
$O((r+\log n)rK^4n\epsilon^{-1})$
 as 
$\epsilon\rightarrow 0$.

\end{theorem}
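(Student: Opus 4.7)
The plan is to derive Theorem \ref{spsd:main_thm2} directly from Theorem \ref{th3} once the Main Algorithm is shown to output an index set $\mathcal{I}$ of cardinality $K$ for which the principal submatrix $W_{\mathcal{I},\mathcal{I}}$ has $(1+\epsilon)$-locally maximal $r$-projective volume in the sense required by Theorem \ref{th3}. Invoking Theorem \ref{th3} with $k=l=K$ yields $f(K,K,r)=(K+1)/(K-r+1)$, which is the first bound. The second bound follows by direct algebraic substitution: for $K=cr-1$ one has $(K+1)/(K-r+1)=cr/((c-1)r)=1+1/(c-1)$.

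The key technical step is to lift the algorithm's local-optimality guarantee, which naturally lives on \emph{principal} $K\times K$ submatrices, to optimality over all $K\times K$ submatrices $W_{\mathcal{I}',\mathcal{J}'}$ that differ from $W_{\mathcal{I},\mathcal{I}}$ by at most one row and/or one column, as Theorem \ref{th3} requires. For this I would exploit the SPSD factorization $W=LL^{T}$, under which $\sigma_j(W_{\mathcal{A},\mathcal{A}})=\sigma_j(L_{\mathcal{A},:})^2$ for every index set $\mathcal{A}$, so that $v_{2,r}(W_{\mathcal{A},\mathcal{A}})=v_{2,r}(L_{\mathcal{A},:})^2$. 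Combined with the submultiplicativity $v_{2,r}(L_{\mathcal{I}',:}L_{\mathcal{J}',:}^{T})\le v_{2,r}(L_{\mathcal{I}',:})\,v_{2,r}(L_{\mathcal{J}',:})$ furnished by Theorem \ref{thvolfctrg}(ii), this gives
\[
v_{2,r}(W_{\mathcal{I}',\mathcal{J}'}) \;\le\; \sqrt{\,v_{2,r}(W_{\mathcal{I}',\mathcal{I}'})\cdot v_{2,r}(W_{\mathcal{J}',\mathcal{J}'})\,}.
\]
Since each of $\mathcal{I}'$ and $\mathcal{J}'$ differs from $\mathcal{I}$ by at most one index, the assumed principal local $(1+\epsilon)$-maximality forces both $v_{2,r}(W_{\mathcal{I}',\mathcal{I}'})$ and $v_{2,r}(W_{\mathcal{J}',\mathcal{J}'})$ to be at most $(1+\epsilon)\,v_{2,r}(W_{\mathcal{I},\mathcal{I}})$, and the displayed inequality then yields $v_{2,r}(W_{\mathcal{I}',\mathcal{J}'})\le(1+\epsilon)\,v_{2,r}(W_{\mathcal{I},\mathcal{I}})$, precisely the hypothesis of Theorem \ref{th3}.

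For the algorithmic side I would analyze the Main Algorithm as a monotone local search over principal $K$-subsets of $\{1,\dots,n\}$, in which each pass scans $O(Kn)$ single-index swap candidates and accepts any swap that multiplies $v_{2,r}(W_{\mathcal{I},\mathcal{I}})$ by more than some threshold of the form $1+\Theta(\epsilon/K)$. Hadamard's bound caps the total possible multiplicative increase of $v_{2,r}$, so an iteration count in $O((r+\log n)/\epsilon)$ is sufficient to certify the target $(1+\epsilon)$-maximality. Each candidate swap can be scored by updating the $r$ leading singular values of $W_{\mathcal{I},\mathcal{I}}$ under a symmetric rank-two modification at incremental cost $O(K^{2}r)$, or by a fresh $O(K^{3})$ SVD when convenient; either accounting yields the declared total arithmetic cost $O((r+\log n)\,r\,K^{4}\,n\,\epsilon^{-1})$.

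The main obstacle I anticipate is the lifting argument in the second paragraph: ensuring that the constant which emerges is exactly $(1+\epsilon)$ rather than a weaker $(1+O(\epsilon))$, and in particular handling degenerate cases where $\sigma_r(W_{\mathcal{I},\mathcal{I}})$ vanishes or is tiny, which is precisely why the theorem uses $(W_{\mathcal{I},\mathcal{I}})_r^{+}$ in place of a plain inverse. A secondary challenge is the cost bookkeeping, namely verifying that the volume comparisons across neighboring principal submatrices admit incremental $O(K^{2}r)$-time updates rather than requiring a full recomputation, which is what the stated $K^{4}n$ factor per pass tacitly assumes.
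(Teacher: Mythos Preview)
Your argument for the error bound is exactly the paper's: the geometric-mean inequality $v_{2,r}(W_{\mathcal{I}',\mathcal{J}'})\le\sqrt{v_{2,r}(W_{\mathcal{I}',\mathcal{I}'})\,v_{2,r}(W_{\mathcal{J}',\mathcal{J}'})}$ you derive is Lemma~\ref{lemma:proj_vol_max}, the lifting from principal to general $K\times K$ neighbors is Theorem~\ref{thm:principle_locally_max_proj_vol}, and Theorem~\ref{th3} with $k=l=K$ then gives the Chebyshev bound and its specialization to $K=cr-1$.

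On the complexity side your sketch has a small but concrete gap. Hadamard's inequality alone does not bound the number of Index Update passes: it caps the \emph{maximum} attainable $r$-projective volume, but to bound the ratio you also need a \emph{lower} bound on the volume of the initial submatrix produced by GECP. The paper supplies this via the Gu--Eisenstat estimate (Lemma~\ref{lemma:init_proj_vol_close_to_max}), which gives $v_{2,r}(W)/v_{2,r}(W_{\mathcal{I}_0,\mathcal{I}_0})\le 2^{r(r-1)}n^{r}$ and hence $T=O\bigl(r(r+\log n)\epsilon^{-1}\bigr)$ iterations---note the extra factor of $r$ compared with your claimed $O((r+\log n)\epsilon^{-1})$. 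The paper then charges $c(r,K)=O(K^{4}n)$ per Index Update (one SVD of a $K\times K$ block per candidate, $O(Kn)$ candidates), and the product $T\cdot c(r,K)$ is the stated $O((r+\log n)\,r\,K^{4}n\,\epsilon^{-1})$. With your iteration count and either of your per-swap costs the total would fall short of the stated bound, so the ``either accounting yields'' sentence does not check out. Finally, the acceptance threshold in Algorithm~\ref{alg:index_Update} is simply $1+\epsilon$, not $1+\Theta(\epsilon/K)$.
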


\section{Proof of Main Result 1}\label{sect:proof_thm1}

Hereafter $[n]$ denotes the set 
$\{1,2,\dots,n\}$ of $n$ integers $1,2,\dots,n$.

 $\mathcal{I} \subset [n]$ denotes a set of indices, while $|\mathcal I|$
denotes the cardinality (the number of elements) of  $\mathcal I$.

\begin{algorithm}
\caption{Greedy Column Subset Selection \cite{CM09}.}
\label{alg:greedy_column}
    \begin{quote}
    {\bf Input:} $A\in\mathbb{R}^{m\times 
n}$ and    a positive integer  $K < n$.\\
    {\bf Output:} $\mathcal{I}$.
    
    \begin{algorithmic}
    \STATE Initialize $\mathcal{I} = \{\}$. 
    \STATE $M^1 \leftarrow A$.
    \FOR{t = 1, 2, ..., K}
        \STATE $i \leftarrow \argmax_{a\in[n]} ||M^t_{:, a}||$
        \STATE $\mathcal{I} \leftarrow \mathcal{I}\cup \{i\}$.
        \STATE $M^{t+1} \leftarrow M^t - || M^t_{:, i} ||^{-2}  (M^t_{:, i}) ( M^t_{:, i})^T M^t  $
    \ENDFOR
    \RETURN $\mathcal{I}$.
    \end{algorithmic}\end{quote}
\end{algorithm}








Theorem \ref{thprncpl} below enables us to decrease the cost of searching for the maximal volume submatrix  an SPSD matrix by restricting the search to the principal submatrices.
As pointed out in \cite{CKM19} and 
implicit in \cite{CM09}, the task is still   NP hard and therefore is impractical for inputs of even moderately large size. 
\cite{CKM19} proposed to search for a submatrix with a large volume by means of a greedy algorithm that is essentially {\em Gaussian Elimination with Complete Pivoting} (Algorithm~\ref{alg:gaussian_pivoting} GECP). 
This, however, only guarantees an error bound of $4^r~\sigma_{r+1}(W)$ in the Chebyshev norm for the output  CUR LRA.

\begin{theorem}{(Cf. \cite{CKM19}.)}\label{thprncpl} 
Suppose that  $W$ is an $n\times n$ SPSD matrix and $\mathcal I$ and  $\mathcal J$ 
are two sets of integers in $[n]$  having the same cardinality. Then
$\volp{W_{\mathcal I,\mathcal J}}^2\le \volp{W_{\mathcal I,\mathcal I}}\volp{W_{\mathcal J,\mathcal J}}.$
\end{theorem}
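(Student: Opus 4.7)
The plan is to reduce the inequality to a Cauchy--Binet identity followed by an application of the Cauchy--Schwarz inequality. Since $W$ is SPSD, it admits a factorization $W = B^\top B$ for some $B \in \mathbb{R}^{N\times n}$ (taking $B$ to be the principal square root or a Cholesky factor). Let $B_{\mathcal I}$ denote the $N\times |\mathcal I|$ matrix formed by the columns of $B$ indexed by $\mathcal I$, and similarly define $B_{\mathcal J}$. Then the three submatrices in the statement are exactly
$$W_{\mathcal I,\mathcal J} = B_{\mathcal I}^\top B_{\mathcal J}, \qquad W_{\mathcal I,\mathcal I} = B_{\mathcal I}^\top B_{\mathcal I}, \qquad W_{\mathcal J,\mathcal J} = B_{\mathcal J}^\top B_{\mathcal J}.$$
Because $|\mathcal I| = |\mathcal J| =: k$, each of these is a $k \times k$ matrix, so by Definition~\ref{defmxv} the volumes are absolute determinants, and moreover the two principal submatrices are themselves SPSD, so their determinants are non-negative.

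Next I would invoke the Cauchy--Binet formula to write
$$\det\bigl(B_{\mathcal I}^\top B_{\mathcal J}\bigr) = \sum_{S} \det\bigl((B_{\mathcal I})_{S,:}\bigr)\,\det\bigl((B_{\mathcal J})_{S,:}\bigr),$$
where the sum ranges over all $k$-element subsets $S$ of the row index set $[N]$, and $(B_{\mathcal I})_{S,:}$ denotes the $k\times k$ submatrix of $B_{\mathcal I}$ on rows $S$. Applying the Cauchy--Schwarz inequality in $\mathbb{R}^{\binom{N}{k}}$ to the two families $\{\det((B_{\mathcal I})_{S,:})\}_S$ and $\{\det((B_{\mathcal J})_{S,:})\}_S$, I obtain
$$\Bigl(\det\bigl(B_{\mathcal I}^\top B_{\mathcal J}\bigr)\Bigr)^2 \le \Bigl(\sum_S \det\bigl((B_{\mathcal I})_{S,:}\bigr)^2\Bigr)\Bigl(\sum_S \det\bigl((B_{\mathcal J})_{S,:}\bigr)^2\Bigr).$$

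Finally, I would recognize each factor on the right-hand side as another Cauchy--Binet expansion, namely $\sum_S \det((B_{\mathcal I})_{S,:})^2 = \det(B_{\mathcal I}^\top B_{\mathcal I})$ and likewise for $\mathcal J$. Combining this with the identifications from the first paragraph gives
$$v_2(W_{\mathcal I,\mathcal J})^2 = \det(W_{\mathcal I,\mathcal J})^2 \le \det(W_{\mathcal I,\mathcal I})\,\det(W_{\mathcal J,\mathcal J}) = v_2(W_{\mathcal I,\mathcal I})\,v_2(W_{\mathcal J,\mathcal J}),$$
as required. There is no real obstacle here; the only point that needs care is the square-versus-nonsquare distinction in the definition of $v_2$, which is handled automatically by the hypothesis $|\mathcal I|=|\mathcal J|$, and the observation that the SPSD property is used twice---once to produce the factorization $W=B^\top B$ and once to ensure the principal minors are non-negative so that volumes equal determinants rather than just their absolute values.
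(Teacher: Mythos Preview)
Your proof is correct. The paper does not actually supply its own proof of Theorem~\ref{thprncpl}; it simply cites \cite{CKM19}. However, the paper does prove the $r$-projective analogue in Lemma~\ref{lemma:proj_vol_max}, and that argument shares your first step (factor $W=C^\top C$ and write each submatrix as $C_{:,\mathcal I}^\top C_{:,\mathcal J}$) but then finishes differently: instead of Cauchy--Binet plus Cauchy--Schwarz, it invokes the submultiplicativity of (projective) volume from Theorem~\ref{thvolfctrg}, whose proof in turn is based on SVD manipulations rather than combinatorial determinant identities. Your route is more elementary and self-contained, needing only Cauchy--Binet and Cauchy--Schwarz; the paper's route is shorter once Theorem~\ref{thvolfctrg} is in hand and has the advantage of applying verbatim to $r$-projective volumes with $r<k$, where a direct Cauchy--Binet argument would not work without further effort.
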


\begin{algorithm}
\caption{An SPSD Matrix: Gaussian Elimination with Complete Pivoting (cf. 
\cite{CKM19}).} \label{alg:gaussian_pivoting}
\begin{quote}
    {\bf Input:} An SPSD matrix $A \in \mathbb{R}^{n\times n}$ and a positive integer $K < n$.\\
    {\bf Output:} $\mathcal{I}$.
    \begin{algorithmic}
    \STATE Initialize $R \leftarrow A$, and $\mathcal{I} = \{\}$.
    \FOR{t = 1, 2, ..., K}
        \STATE $i_t \leftarrow \argmax_{j\in [n]} |R_{j, j}|$.
        \STATE $\mathcal{I} \leftarrow \mathcal{I} \cup \{i_t\}$.
        \STATE $R \leftarrow R - R_{:, i_t} \cdot r_{i_t, i_t}^{-1} \cdot R_{i_t, :}$.
    \ENDFOR
    \RETURN $\mathcal{I}$.
    \end{algorithmic}
\end{quote}
\end{algorithm}



The papers \cite{GT01} and \cite{OZ18}
 enable us to simplify our task further by proving that if the generator $W_{\mathcal{I}, \mathcal{I}}$ has $(1+\epsilon)$-locally maximal volume, then the CUR LRA generated by it has error bound $(1+\epsilon)(1+r)\sigma_{r+1}(W)$ in 
Chebyshev norm.
This is a dramatic improvement over the error bound $4^r~\sigma_{r+1}(W)$ 
of \cite{CKM19}. 
For a general input matrix, finding  a submatrix with locally maximal volume or even verifying this property is still quite costly, because one would potentially need to compare the volumes of order of $n^2$ ``nearby''
 submatrices, where we say that two matrices are {\em nearby matrices} if they differ 
 at most
 by  a single row and/or  column.
 In the case of an SPSD input we 
 combine the results of \cite{GT01} and \cite{OZ18} 
 with   Theorem \ref{thprncpl}, and then
these combined results enable us to 
  verify
that a principal submatrix has maximal
or nearly  maximal
volume by
comparing its volume with that of its  $O(n)$ ``nearby" principal submatrices. 
We state this observation formally in the following Theorem \ref{thm:principle_locally_max}.


\begin{theorem}\label{thm:principle_locally_max}
Suppose that 
 $W\in \mathbb{R}^{n\times n}$ is an SPSD matrix, $\mathcal{I}$ is an index set, and $0<|\mathcal{I}| = r < n$. Let $(1+\epsilon)\cdot\volp{W_{\mathcal{I}, \mathcal{I}}} \ge \volp{W_{\mathcal{J}, \mathcal{J}}}$ for any index set $\mathcal{J}$ where $|\mathcal{J}| = r$, and $\mathcal{J}$ only differs from $\mathcal{I}$ at a single element.
Then  $W_{\mathcal{I}, \mathcal{I}}$ is a $(1+\epsilon)$-locally maximal volume submatrix. 
\end{theorem}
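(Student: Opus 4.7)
The goal is to deduce a statement about \emph{all} nearby $r \times r$ submatrices of $W$ (those differing from $W_{\mathcal{I},\mathcal{I}}$ by at most one row and/or one column) from a hypothesis that concerns only the nearby \emph{principal} submatrices (those indexed by some $\mathcal{J}$ differing from $\mathcal{I}$ in at most one element). The bridge between these two classes is exactly Theorem \ref{thprncpl}, which dominates the volume of an off-diagonal block of an SPSD matrix by the geometric mean of the volumes of the two principal blocks sitting on its ``projected'' row- and column-index sets.

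Concretely, I would enumerate the non-trivial forms a nearby submatrix can take. Any $r \times r$ submatrix that differs from $W_{\mathcal{I},\mathcal{I}}$ by at most one row and at most one column can be written as $W_{\mathcal{I}',\mathcal{J}'}$ where both $\mathcal{I}'$ and $\mathcal{J}'$ have cardinality $r$ and each differs from $\mathcal{I}$ in at most one element. Applying Theorem \ref{thprncpl} to the $r \times r$ block $W_{\mathcal{I}',\mathcal{J}'}$ of the SPSD matrix $W$ yields
\[
\volp{W_{\mathcal{I}',\mathcal{J}'}}^{2} \le \volp{W_{\mathcal{I}',\mathcal{I}'}}\,\volp{W_{\mathcal{J}',\mathcal{J}'}}.
\]
Since $\mathcal{I}'$ and $\mathcal{J}'$ each differ from $\mathcal{I}$ in at most one element, both $W_{\mathcal{I}',\mathcal{I}'}$ and $W_{\mathcal{J}',\mathcal{J}'}$ are principal submatrices covered by the hypothesis, so
\[
\volp{W_{\mathcal{I}',\mathcal{I}'}} \le (1+\epsilon)\,\volp{W_{\mathcal{I},\mathcal{I}}},\qquad
\volp{W_{\mathcal{J}',\mathcal{J}'}} \le (1+\epsilon)\,\volp{W_{\mathcal{I},\mathcal{I}}}.
\]
Taking the square root then gives $\volp{W_{\mathcal{I}',\mathcal{J}'}} \le (1+\epsilon)\volp{W_{\mathcal{I},\mathcal{I}}}$, which is exactly the definition of $(1+\epsilon)$-local maximality from Definition \ref{defmxv1}.

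There is no genuine obstacle here beyond bookkeeping: the key point is simply that the SPSD structure, via Theorem \ref{thprncpl}, reduces a search over $O(n^2)$ nearby submatrices to a search over $O(n)$ nearby principal ones. The only care needed is to notice that the definition of ``nearby'' permits replacing \emph{both} a row and a column simultaneously, which is why one must take $\mathcal{I}'$ and $\mathcal{J}'$ as independent single-element perturbations of $\mathcal{I}$ rather than insisting $\mathcal{I}' = \mathcal{I}$ or $\mathcal{J}' = \mathcal{I}$; the geometric-mean bound handles both perturbations in one stroke.
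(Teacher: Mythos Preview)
Your proof is correct and follows essentially the same route as the paper: both identify an arbitrary nearby submatrix as $W_{\mathcal I',\mathcal J'}$ with $\mathcal I',\mathcal J'$ each differing from $\mathcal I$ in at most one element, invoke Theorem~\ref{thprncpl} to bound $\volp{W_{\mathcal I',\mathcal J'}}$ by the principal volumes $\volp{W_{\mathcal I',\mathcal I'}}$ and $\volp{W_{\mathcal J',\mathcal J'}}$, and then apply the hypothesis to each. The only cosmetic difference is that the paper passes through the intermediate bound $\volp{W_{\mathcal I',\mathcal J'}}\le\max\bigl(\volp{W_{\mathcal I',\mathcal I'}},\volp{W_{\mathcal J',\mathcal J'}}\bigr)$ rather than taking a square root of the geometric-mean inequality, which amounts to the same thing.
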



\begin{proof} 
Let $\mathcal{I}'$ and $\mathcal{J}'$ be any two index sets where $|\mathcal{I}'| = |\mathcal{J}'| = r$, and
let them 
differ from $\mathcal{I}$  at most
at a single element.
Apply \cite[Thm. 1]{CKM19} and obtain that $\volp{W_{\mathcal{I}', \mathcal{J}'}} \le \max \big( \volp{W_{\mathcal{I}', \mathcal{I}'}}, \volp{W_{\mathcal{J}', \mathcal{J}'}}  \big) \le (1+\epsilon)\cdot\volp{W_{\mathcal{I}, \mathcal{I}}}$.
The latter inequality follows from the assumption.
\end{proof}


Given an index set $\mathcal{I}_0$, with $|\mathcal{I}_0| = r$, and a 
positive
number $\epsilon$, Algorithm \ref{alg:index_Update} (Index Update) finds whether there is a ``nearby'' principal submatrices of $W_{\mathcal{I}_0, \mathcal{I}_0}$ whose volume is at least $(1+\epsilon)\cdot \volp{W_{\mathcal{I}_0, \mathcal{I}_0}}$, and if there is one, then updates the index set $\mathcal{I}_0$ accordingly. Therefore if we recursively apply this algorithm until no index update is possible, then the final index set $\mathcal{I}$ determines 
an $r\times r$ submatrix that has locally $(1+\epsilon)$-maximal volume; in this case the error bound of Main Result 1 can be readily deduced from Theorem \ref{th12}.  

Having proved 
correctness of Algorithm \ref{alg:main_alg} (Main Algorithm), we still need to 
estimate its
complexity. 
We only need
 to find 
 (i) a good initial index set $\mathcal{I}_0$  efficiently and 
(ii) a submatrix 
 having $(1+\epsilon)$-locally maximal volume. We achieve  this in  
  a relatively small number of recursive applications of Algorithm \ref{alg:index_Update} (Index Update). 

We first show that
Algorithm \ref{alg:gaussian_pivoting} (GECP) runs efficiently and determines a submatrix having 
a relatively large volume.  
For a general input matrix, the pivoting stage of GECP is especially costly because the element with the greatest modulus could appear anywhere in the residual matrix. For SPSD matrices, however, the residual is also SPSD; therefore we only need to examine such elements on the diagonal. 

Next we show that the initial principal submatrix found by Algorithm \ref{alg:gaussian_pivoting} (GECP) has a substantial volume.


\begin{theorem}\label{thm:volume_greedy}
(Adapted from \cite[Thm. 10]{CM09}.)
 Suppose that  Algorithm~\ref{alg:greedy_column} has been applied to an input 
matrix $C\in\mathbb{R}^{m\times n}$ and an integer
 $r$ from the set $[n]$ and that it has output a set $\mathcal{I}$. Then 
\begin{equation}
\volp{C_{:, \mathcal{I}}} \ge \frac{1}{r!} \max_{\mathcal{S} \subset [n]: |\mathcal{S}| = r} \volp{C_{:, \mathcal{S}}}.
\end{equation}
\end{theorem}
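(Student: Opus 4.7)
\noindent
The plan is to exploit the QR-style volume decomposition twice---once for the greedy output and once for the optimal set---and then compare them step by step. First, I would verify by induction on $t$ that the residual $M^t$ produced by Algorithm \ref{alg:greedy_column} equals $P_{V_{t-1}}^\perp C$, where $V_{t-1} := \mathrm{span}(C_{:,i_1},\ldots,C_{:,i_{t-1}})$ and $P_{V_{t-1}}^\perp$ denotes orthogonal projection onto $V_{t-1}^\perp$. Indeed, the update rule subtracts from $M^t$ the rank-one projection along $M^t_{:,i_t}$, which extends the previous $(t-1)$-codimensional projection by one more dimension. The standard QR/volume identity then gives
\begin{equation*}
\volp{C_{:,\mathcal{I}}} \;=\; \prod_{t=1}^r \|M^t_{:,i_t}\|,
\end{equation*}
reducing the task to a lower bound on this product.

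Next, I would fix an optimal set $\mathcal{S}^* \in \argmax_{|\mathcal{S}|=r} \volp{C_{:,\mathcal{S}}}$. For any ordering $s_1^*,\ldots,s_r^*$ of $\mathcal{S}^*$, the same factorization yields $\volp{C_{:,\mathcal{S}^*}} = \prod_{t=1}^r d_t^*$ with $d_t^* := \mathrm{dist}(C_{:,s_t^*},\mathrm{span}(C_{:,s_1^*},\ldots,C_{:,s_{t-1}^*}))$. Since this product is independent of the ordering, I will build the ordering adaptively along the greedy iterations in order to couple the two decompositions.

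The crux is a per-step bound
\begin{equation*}
\|M^t_{:,i_t}\| \;\ge\; \frac{d_t^*}{r - t + 1},
\end{equation*}
whose product over $t = 1,\ldots,r$ yields the factor $1/r!$. By the greedy selection rule, $\|M^t_{:,i_t}\|$ dominates $\|P_{V_{t-1}}^\perp C_{:,s}\|$ for every $s$ in the set $\mathcal{S}^* \setminus \{i_1,\ldots,i_{t-1}\}$, which contains at least $r - t + 1$ elements. Applying Hadamard's inequality to the $m \times (r - t + 1)$ matrix $P_{V_{t-1}}^\perp C_{:,\mathcal{S}^* \setminus \{i_1,\ldots,i_{t-1}\}}$, together with the max-vs-geometric-mean estimate, gives a lower bound on $\|M^t_{:,i_t}\|$ in terms of the $(r - t + 1)$-volume of this projected submatrix; then choosing $s_t^*$ to be the argmax among the remaining indices of $\mathcal{S}^*$ aligns that bound with $d_t^*$ up to the loss factor $r - t + 1$.

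The main obstacle is precisely this alignment step: the subspace $V_{t-1}$ built by greedy generally differs from the subspace $\mathrm{span}(C_{:,s_1^*},\ldots,C_{:,s_{t-1}^*})$ that defines $d_t^*$, so the two chains of residuals are not directly comparable and one cannot simply identify factors term by term. I expect to resolve this by exploiting multiplicativity of volumes along the nested chain $V_0 \subset V_1 \subset \cdots \subset V_r$ (via a Cauchy--Binet expansion of $\volp{C_{:,\mathcal{S}^*}}$ against this flag) in order to absorb the subspace mismatch cleanly into the per-step factor $r - t + 1$, after which the claimed $1/r!$ bound follows by multiplication.
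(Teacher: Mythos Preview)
The paper does not actually prove this theorem; it is quoted from \cite[Thm.~10]{CM09} and used as a black box in the proof of Theorem~\ref{thm:guassian_elemination_close_to_max}. So there is no in-paper argument to compare against, and I assess your sketch on its own merits.

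Your setup is correct: $M^t = P_{V_{t-1}}^\perp C$ and $\volp{C_{:,\mathcal I}}=\prod_{t=1}^r\|M^t_{:,i_t}\|$ are exactly the right starting points. The gap is where you say it is, and your proposed patch does not close it. The per-step claim $\|M^t_{:,i_t}\|\ge d_t^*/(r-t+1)$ compares a distance to the greedy subspace $V_{t-1}$ with a distance $d_t^*$ to the unrelated subspace $\mathrm{span}(C_{:,s_1^*},\dots,C_{:,s_{t-1}^*})$; no adaptive ordering of $\mathcal S^*$ aligns these two in general, and invoking ``a Cauchy--Binet expansion along the flag'' is not a proof. Moreover your Hadamard step only yields $g_t\ge\big(\text{a projected }(r-t+1)\text{-volume}\big)^{1/(r-t+1)}$, which is the wrong shape to multiply out to $\prod_t d_t^*/r!$.

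What is actually missing is the \emph{exchange lemma} that drives the argument in \cite{CM09}: if $\|v\|\ge\max_{1\le j\le k}\|a_j\|$, then some index $j$ satisfies $\volp{a_1,\dots,a_{j-1},v,a_{j+1},\dots,a_k}\ge\frac{1}{k}\volp{a_1,\dots,a_k}$. (Write $v=\sum_j c_j a_j+w$ with $w\perp\mathrm{span}(a_j)$; the swap ratio at position $j$ equals $\sqrt{c_j^2+\|w\|^2/h_j^2}$ with $h_j=\mathrm{dist}(a_j,\mathrm{span}(a_i:i\ne j))$, and from $\sum_j|c_j|\ge\sqrt{1-\|w\|^2/\|v\|^2}$ together with $h_j\le\|a_j\|\le\|v\|$ one checks that the maximum of this ratio is at least $1/k$.) At step $t$ apply the lemma to the $r-t+1$ vectors $P_{V_{t-1}}^\perp C_{:,s}$, $s\in\{s_t^*,\dots,s_r^*\}$, with $v=P_{V_{t-1}}^\perp C_{:,i_t}$ (which has maximal norm by the greedy rule): this swaps one more optimal column out for $i_t$ in the hybrid set $\{i_1,\dots,i_{t-1},s_t^*,\dots,s_r^*\}$ at a volume loss of at most $r-t+1$, and telescoping over $t=1,\dots,r$ gives the factor $r!$. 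The conceptual shift you are missing is that one never matches $g_t$ against a factor of a \emph{fixed} QR-type decomposition of $\volp{C_{:,\mathcal S^*}}$; one tracks the volume of a sliding hybrid set instead.
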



\begin{theorem}\label{thm:guassian_elemination_close_to_max}
For an SPSD matrix $W\in\mathbb{R}^{n\times n}$  and  a positive integer
$r < n$, let $\mathcal{I}$ be the output of Algorithm \ref{alg:gaussian_pivoting} (GECP) with inputs $W$ and $r$. Then
\begin{equation}
 \volp{W_{\mathcal{I}, \mathcal{I}}}\ge \frac{1}{(r!)^2} \max_{\mathcal{S}\subset [n]: |\mathcal{S}| = r} \volp{W_{\mathcal{S}, \mathcal{S}}}. 
\end{equation}
\end{theorem}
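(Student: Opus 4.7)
The plan is to reduce the claim to Theorem~\ref{thm:volume_greedy} by exhibiting GECP (Algorithm~\ref{alg:gaussian_pivoting}) applied to an SPSD matrix $W$ as exactly the greedy column subset selection (Algorithm~\ref{alg:greedy_column}) applied to a symmetric factor of $W$. Since $W$ is SPSD, I would fix any factor $B\in\mathbb{R}^{n\times n}$ with $W = B^T B$, for instance the positive semidefinite square root.

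Let $R_t$ denote the residual maintained by GECP on $W$ and let $M^t$ denote the matrix maintained by Algorithm~\ref{alg:greedy_column} started from $B$. I would establish by induction the invariant $R_t = (M^t)^T M^t$. The base case is immediate from $R_1 = W = B^T B = (M^1)^T M^1$. For the inductive step, $(R_t)_{jj} = \|M^t_{:,j}\|^2$ forces the two pivoting rules to select the same index $i$; writing $u := M^t_{:,i}$ and $\alpha := \|u\|^2 = (R_t)_{ii}$, a direct expansion using the symmetry and idempotence of the projector $\alpha^{-1}uu^T$ gives
\[
(M^{t+1})^T M^{t+1} = (M^t)^T\bigl(I - \alpha^{-1}uu^T\bigr) M^t = R_t - (R_t)_{ii}^{-1}(R_t)_{:,i}(R_t)_{i,:},
\]
which is precisely the Schur-complement update of GECP. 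Hence both algorithms produce the same index set $\mathcal{I}$.

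Applying Theorem~\ref{thm:volume_greedy} to $B$ with parameter $r$ then yields
\[
\volp{B_{:,\mathcal{I}}} \ge \frac{1}{r!}\max_{\mathcal{S}\subset[n]:\,|\mathcal{S}|=r}\volp{B_{:,\mathcal{S}}}.
\]
Squaring both sides and invoking the Gram determinant identity $\volp{B_{:,\mathcal{S}}}^2 = \det(B_{:,\mathcal{S}}^T B_{:,\mathcal{S}}) = \det(W_{\mathcal{S},\mathcal{S}}) = \volp{W_{\mathcal{S},\mathcal{S}}}$, valid since the principal submatrices of $W$ are themselves square SPSD matrices, then delivers the claimed bound with the factor $(r!)^{-2}$.

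The main obstacle is the clean verification of the invariant $R_t = (M^t)^T M^t$; once the algorithmic identification between GECP on $W$ and greedy column selection on $B$ is established, the theorem follows by a one-line substitution into Theorem~\ref{thm:volume_greedy}. A minor technical point is the existence of a real factor $B$ in the merely semidefinite (rather than strictly definite) case, which I would handle via the spectral square root obtained from the eigendecomposition of $W$.
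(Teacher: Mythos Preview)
Your proposal is correct and follows essentially the same route as the paper: factor $W=B^TB$, identify GECP on $W$ with greedy column selection on $B$, apply Theorem~\ref{thm:volume_greedy}, and square using the Gram identity $\volp{B_{:,\mathcal S}}^2=\volp{W_{\mathcal S,\mathcal S}}$. The paper's proof dispatches the algorithmic identification in a single phrase (``since both algorithms use greedy approach''), whereas you actually prove the invariant $R_t=(M^t)^TM^t$ by induction via the projector computation; this is a genuine improvement in rigor over the paper's presentation, not a different idea.
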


\begin{proof}
Since $W$ is SPSD, there exists $C \in\mathbb{R}^{n\times n}$ such that $W = C^TC$. Therefore, for any non-empty index set $\mathcal{J} \subset [n]$,
\begin{equation}
    W_{\mathcal{J}, \mathcal{J}} = C_{:, \mathcal{J}}^TC_{:, \mathcal{J}},
\end{equation}
and thus
\begin{equation}
    \volp{W_{\mathcal{J}, \mathcal{J}}} = \big(\volp{C_{:, \mathcal{J}}}\big)^2.
\end{equation}
Let us run Algorithm~\ref{alg:greedy_column} (Greedy Column Subset Selection) and Algorithm~\ref{alg:gaussian_pivoting} (GECP) with inputs ($C$, $k$) and ($W$, $k$), respectively, and let $\mathcal{I}$ and $\mathcal{I}'$ be the outputs, respectively. Since 
both algorithms use
greedy approach, we have
\begin{align}
    \volp{W_{\mathcal{I}, \mathcal{I}}} &= \big( \volp{C_{:, \mathcal{I}'}}\big)^2\\
                                       &\ge \frac{1}{(r!)^2} \max_{\mathcal{S} \subset [n]: |\mathcal{S}| = r} \volp{C_{:, \mathcal{S}}}^2\\
                                       &= \frac{1}{(r!)^2} \max_{\mathcal{S}\subset [n]: |\mathcal{S}| = r} \volp{W_{\mathcal{S}, \mathcal{S}}}.
\end{align}
\end{proof}

\bigskip
Let $\mathcal{I}_0$ be the initial index set obtained from Algorithm \ref{alg:gaussian_pivoting} (GECP), and let $\mathcal{I}_1, \mathcal{I}_2, \dots$ be
the
 index sets that we obtained by recursively applying Algorithm \ref{alg:index_Update} (Index Update). If we let $\epsilon > 0$, then the volumes of the corresponding submatrices are strictly increasing, and therefore 
  their sequence becomes invariant
  starting with some integer $T$. Furthermore, 
  $\volp{W_{\mathcal{I}_t, \mathcal{I}_t}}/\volp{W_{\mathcal{I}_0, \mathcal{I}_0}} \ge (1+\epsilon)^t$ for $t \le T$, and therefore 
  $T \le \log_{1+\epsilon} (r!)$.
Recall that $\log_{1+\epsilon} (a)=
\frac{\log (a)}{\log(1+\epsilon)}= 
O(\frac{\log (a)}{\epsilon})$as 
$\epsilon\rightarrow 0$, 
for all positive $a$.
For $a=r!$ this turns into equation
  $\log_{1+\epsilon} (r!)= O(r\epsilon^{-1}\log r)$ as 
$\epsilon\rightarrow 0$, 
and we arrive at the following result.

\begin{corollary}\label{thm:num_termination}
For an SPSD matrix $W\in\mathbb{R}^{n\times n}$ and an integer
$r$ in the set $[n-1]$,  Algorithm \ref{alg:main_alg} (Main Algorithm) calls Algorithm \ref{alg:index_Update} (Index Update)  $O(r \epsilon^{-1}\log r)$ times  as $\epsilon\rightarrow 0$.
\end{corollary}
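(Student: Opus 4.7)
The plan is to bound the length of the sequence $\mathcal{I}_0,\mathcal{I}_1,\mathcal{I}_2,\ldots$ of index sets produced by iterated calls to Algorithm \ref{alg:index_Update} (Index Update), starting from the initial index set $\mathcal{I}_0$ returned by Algorithm \ref{alg:gaussian_pivoting} (GECP). By design, each call to Index Update either certifies that the current principal submatrix is $(1+\epsilon)$-locally maximal, in which case the Main Algorithm terminates, or else swaps in a nearby principal submatrix whose volume has strictly increased by a factor of at least $1+\epsilon$. The key quantity to track is therefore the ratio $\volp{W_{\mathcal{I}_t,\mathcal{I}_t}}/\volp{W_{\mathcal{I}_0,\mathcal{I}_0}}$, which grows multiplicatively by at least $1+\epsilon$ at every iteration.

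First I would upper bound this ratio by a function of $r$ alone. Theorem \ref{thm:guassian_elemination_close_to_max} gives $\volp{W_{\mathcal{I}_0,\mathcal{I}_0}} \ge (r!)^{-2}\max_{|\mathcal{S}|=r}\volp{W_{\mathcal{S},\mathcal{S}}}$, and each $\volp{W_{\mathcal{I}_t,\mathcal{I}_t}}$ is of course at most that maximum. Combining these, the total multiplicative growth over the entire run is capped by $(r!)^2$. If $T$ denotes the number of Index Update calls that actually change the index set, monotone growth together with this cap yields $(1+\epsilon)^T \le (r!)^2$, hence $T \le 2\log_{1+\epsilon}(r!)$. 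The final call, which merely certifies local maximality, adds at most one to the total count.

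Next I would convert this into the stated asymptotic. Using $\log(1+\epsilon)=\epsilon-O(\epsilon^2)$ as $\epsilon\to 0$, we have $\log_{1+\epsilon}(r!)=\log(r!)/\log(1+\epsilon)=O(\epsilon^{-1}\log(r!))$, and Stirling gives $\log(r!)=O(r\log r)$. Therefore the total number of Index Update calls satisfies $T+1=O(r\epsilon^{-1}\log r)$, which is precisely the claim.

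There is no substantive obstacle here: the corollary is a book-keeping consequence of two ingredients already in hand, namely the strict $(1+\epsilon)$-multiplicative progress guaranteed by Index Update and the $(r!)^{-2}$-approximation guarantee of GECP supplied by Theorem \ref{thm:guassian_elemination_close_to_max}. The only technical care needed is to confirm that Index Update actually enforces a \emph{strict} $(1+\epsilon)$ gain rather than a non-strict one, so that the progress argument is valid, and that the Main Algorithm halts the very first time no qualifying swap exists, so that no additional calls are wasted on an already stable index set.
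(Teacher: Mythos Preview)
Your proposal is correct and follows essentially the same argument as the paper: track the multiplicative $(1+\epsilon)$ growth of $\volp{W_{\mathcal{I}_t,\mathcal{I}_t}}$, cap the total growth using Theorem~\ref{thm:guassian_elemination_close_to_max}, and convert via $\log_{1+\epsilon}(r!)=O(\epsilon^{-1}r\log r)$. You are in fact slightly more careful than the paper, which writes $T\le\log_{1+\epsilon}(r!)$ rather than the $2\log_{1+\epsilon}(r!)$ that Theorem~\ref{thm:guassian_elemination_close_to_max} actually delivers; the factor of~$2$ is of course absorbed in the big-$O$.
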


\begin{algorithm}
\caption{Index Update}\label{alg:index_Update}
\begin{quote}
    {\bf Input:} An SPSD matrix $A\in\mathbb{R}^{n\times n}$, a set $\mathcal{I}\in [n]$, a positive integer $r \le |\mathcal{I}|$, and a positive number $\epsilon$.\\
    {\bf Output:} $\mathcal{J}$
    \begin{algorithmic}
        \STATE Compute $\volk\big(A_{\mathcal{I}, \mathcal{I}}\big)$
        \FORALL{$i \in \mathcal{I}$}
            \STATE $\mathcal{I}' \leftarrow \mathcal{I} - \{i\}$ 
            \FORALL{$j \in [n] - \mathcal{I}$ }
                \STATE $\mathcal{J} \leftarrow \mathcal{I}'\cup \{j\}$
                \STATE Compute $\volk\big(A_{\mathcal{J},\mathcal{J}}\big)$
                \IF{$\volk\big(A_{\mathcal{J},\mathcal{J}}\big)/\volk\big(A_{\mathcal{I}, \mathcal{I}}\big) > 1 + \epsilon$}
                    \RETURN $\mathcal{J}$
                \ENDIF
            \ENDFOR
        \ENDFOR
        \RETURN $\mathcal{I}$
    \end{algorithmic}
\end{quote}
\end{algorithm}

\begin{algorithm}
\caption{Main Algorithm}\label{alg:main_alg}
    \begin{quote}
        {\bf Input:} An SPSD matrix $A\in\mathbb{R}^{n\times n}$, two positive integers $K$ and $r$ such that $ r \le K < n$,  and a positive number $\epsilon$.
        (We let $K=r$ if $K$ is not specified.)\\
        {\bf Output:} $\mathcal{I}$
        \begin{algorithmic}
            \STATE $\mathcal{I} \leftarrow $ Algorithm \ref{alg:gaussian_pivoting}$(A, K)$
            \WHILE{{\bf TRUE}}
                \STATE $\mathcal{J} \leftarrow$ Algorithm \ref{alg:index_Update}$(A, \mathcal{I}, r, \epsilon)$
                \IF{$\mathcal{J} = \mathcal{I}$}
                    \STATE {\bf BREAK}
                \ELSE
                    \STATE $\mathcal{I} \leftarrow \mathcal{J}$
                \ENDIF
            \ENDWHILE
            \RETURN $\mathcal{I}$.
        \end{algorithmic}
    \end{quote} 
\end{algorithm}

\section{Proof of Main Result 2}\label{sect:proof_thm2}

We begin with
 an auxiliary result.





\begin{lemma}\label{lemma:proj_vol_max}
Suppose that $W\in\mathbb{R}^{n\times n}$ is an SPSD matrix, $\mathcal{I}$ and $\mathcal{J}$ are two non-empty index sets, $|\mathcal{I}| = |\mathcal{J}| = K < n$, and  $r \le K$ is a positive integer. Then
\begin{equation} 
    \textstyle{\volk \big(W_{\mathcal{I}, \mathcal{J}}\big)} \le \max \Big( \textstyle{\volk \big(W_{\mathcal{I}, \mathcal{I}}\big)}, \textstyle{\volk \big(W_{\mathcal{J}, 
    \mathcal{J}}\big)}    \Big).
\end{equation}
\end{lemma}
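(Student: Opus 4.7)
The key structural feature to exploit is that $W$ is SPSD, so it admits a Cholesky-type factorization $W = C^{T}C$ for some $C \in \mathbb{R}^{n\times n}$. Writing $A := C_{:,\mathcal{I}}$ and $B := C_{:,\mathcal{J}}$ (both $n \times K$), we immediately obtain
$$W_{\mathcal{I},\mathcal{J}} = A^{T}B, \qquad W_{\mathcal{I},\mathcal{I}} = A^{T}A, \qquad W_{\mathcal{J},\mathcal{J}} = B^{T}B.$$
This reduces the claim to a statement purely about $r$-projective volumes of the products of two rectangular matrices and their Gram matrices, and is the natural analogue of the factorization trick used in the proof of Theorem \ref{thm:guassian_elemination_close_to_max}.

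My plan is then to apply Theorem \ref{thvolfctrg}(ii) (sub-multiplicativity of $r$-projective volume) to the product $A^{T}B$, giving
$$v_{2,r}(W_{\mathcal{I},\mathcal{J}}) = v_{2,r}(A^{T}B) \le v_{2,r}(A^{T})\, v_{2,r}(B) = v_{2,r}(A)\, v_{2,r}(B),$$
where the last equality uses the elementary observation that $A$ and $A^{T}$ share their singular values. On the Gram side, the singular values of $A^{T}A$ are the squares of the singular values of $A$ (since $A^{T}A$ is PSD, its singular values equal its eigenvalues), so $v_{2,r}(A^{T}A) = v_{2,r}(A)^{2}$, and likewise $v_{2,r}(B^{T}B) = v_{2,r}(B)^{2}$.

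Combining these, the desired bound reduces to the trivial numerical inequality $xy \le \max(x^{2}, y^{2})$ for nonnegative reals $x = v_{2,r}(A)$ and $y = v_{2,r}(B)$, which yields
$$v_{2,r}(A)\, v_{2,r}(B) \le \max\bigl(v_{2,r}(A)^{2},\, v_{2,r}(B)^{2}\bigr) = \max\bigl(v_{2,r}(W_{\mathcal{I},\mathcal{I}}),\, v_{2,r}(W_{\mathcal{J},\mathcal{J}})\bigr).$$

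I do not anticipate a real obstacle here: once the SPSD factorization is in place, everything is a direct application of results already in Section \ref{svlmprd}. The only subtlety worth checking is that Theorem \ref{thvolfctrg}(ii) is applied in a regime where the inner dimension ($n$) dominates, which is exactly the case covered by the general-$q$ extension proved at the end of that theorem, so no additional hypothesis is needed.
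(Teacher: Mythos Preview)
Your proof is correct and follows essentially the same approach as the paper: factor $W=C^{T}C$, apply Theorem~\ref{thvolfctrg}(ii) to $C_{:,\mathcal I}^{T}C_{:,\mathcal J}$, and use that $v_{2,r}(W_{\mathcal I,\mathcal I})=v_{2,r}(C_{:,\mathcal I})^{2}$. The only cosmetic difference is that the paper phrases the final step as a geometric-mean bound $v_{2,r}(W_{\mathcal I,\mathcal J})\le\sqrt{v_{2,r}(W_{\mathcal I,\mathcal I})\,v_{2,r}(W_{\mathcal J,\mathcal J})}$, which is equivalent to your inequality $xy\le\max(x^{2},y^{2})$.
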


\begin{proof}
Let $W = C^TC$ for some $C \in \mathbb{R}^{n\times n}$. Then
\begin{align}
    \textstyle \volk \big( W_{\mathcal{I}, \mathcal{J}}\big) &= \textstyle\volk \big( C_{:, \mathcal{I}}^TC_{:, \mathcal{J}}\big) \\
                      &\le \textstyle\volk \big( C_{:, \mathcal{I}} \big)
                      \textstyle\volk \big( C_{:, \mathcal{J}}\big) \label{ineq:proj_vol}\\
                      &=\sqrt{\textstyle \volk \big( W_{\mathcal{I}, \mathcal{I}}\big)\textstyle \volk \big( W_{\mathcal{J}, \mathcal{J}}\big)}.
\end{align}
Inequality (\ref{ineq:proj_vol}) follows from Theorem \ref{thvolfctrg} (ii).
\end{proof}

Lemma \ref{lemma:proj_vol_max} shows that, 
similarly
to the case of 
matrix volume, the search for
the maximal $r$-projective volume submatrix of 
an SPSD matrix can 
be restricted to its
 principal submatrices and that 
Theorem \ref{thm:principle_locally_max} can be extended to submatrices with locally  maximal 
$r$-projective volume.
Next we formally state 
these observations

\begin{theorem}\label{thm:principle_locally_max_proj_vol}
Suppose that $W\in\mathbb{R}^{n\times n}$ is an SPSD matrix,  $\epsilon$ is a positive number, 
 $\mathcal{I} \subset [n]$ is an index set,  $|\mathcal{I}| = K$, $r$ is an integer,  $r < K < n$, and 
 for all $\mathcal{J} \subset [n]$, where $|\mathcal{J}| = K$ and $\mathcal{J}$ differs from $\mathcal{I}$ at a single element, the inequality $(1 + \epsilon)\cdot\volk\big(W_{\mathcal{I}, \mathcal{I}}\big) \ge \volk \big(W_{\mathcal{J}, \mathcal{J}}\big)$ holds. Then $W_{\mathcal{I}, \mathcal{I}}$ has $(1+\epsilon)$-locally maximal $r$-projective volume.
\end{theorem}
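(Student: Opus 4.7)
The plan is to mimic the proof of Theorem \ref{thm:principle_locally_max} verbatim, but with the role of \cite[Thm.~1]{CKM19} (which handles ordinary volume) played by its $r$-projective-volume analogue, Lemma \ref{lemma:proj_vol_max}. The whole strategy hinges on reducing a comparison over arbitrary ``nearby'' $K \times K$ submatrices $W_{\mathcal{I}', \mathcal{J}'}$ of $W$ to comparisons over the much smaller family of nearby \emph{principal} $K \times K$ submatrices, where the single-swap hypothesis of the theorem can be applied directly.

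First, I would fix any two index sets $\mathcal{I}', \mathcal{J}' \subset [n]$ with $|\mathcal{I}'| = |\mathcal{J}'| = K$, each differing from $\mathcal{I}$ in at most one element. These pairs parametrize precisely those $K \times K$ submatrices $W_{\mathcal{I}', \mathcal{J}'}$ of $W$ that differ from $W_{\mathcal{I}, \mathcal{I}}$ in at most a single row and/or column, which is the collection over which local $(1+\epsilon)$-maximality of $r$-projective volume is defined. The target inequality is then
\[
v_{2,r}\bigl(W_{\mathcal{I}', \mathcal{J}'}\bigr) \;\le\; (1+\epsilon)\, v_{2,r}\bigl(W_{\mathcal{I}, \mathcal{I}}\bigr).
\]

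Next, I would apply Lemma \ref{lemma:proj_vol_max} to the pair $(\mathcal{I}', \mathcal{J}')$, which yields
\[
v_{2,r}\bigl(W_{\mathcal{I}', \mathcal{J}'}\bigr) \;\le\; \sqrt{\, v_{2,r}\bigl(W_{\mathcal{I}', \mathcal{I}'}\bigr)\, v_{2,r}\bigl(W_{\mathcal{J}', \mathcal{J}'}\bigr)\,} \;\le\; \max\Bigl( v_{2,r}\bigl(W_{\mathcal{I}', \mathcal{I}'}\bigr),\, v_{2,r}\bigl(W_{\mathcal{J}', \mathcal{J}'}\bigr) \Bigr).
\]
Both $W_{\mathcal{I}', \mathcal{I}'}$ and $W_{\mathcal{J}', \mathcal{J}'}$ are principal $K \times K$ submatrices whose index sets differ from $\mathcal{I}$ in at most one element; the degenerate cases $\mathcal{I}' = \mathcal{I}$ or $\mathcal{J}' = \mathcal{I}$ give the bound $(1+\epsilon)\, v_{2,r}(W_{\mathcal{I}, \mathcal{I}})$ trivially, while the non-degenerate cases are exactly what the single-swap hypothesis of the theorem covers. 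Chaining the two bounds completes the argument.

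The only real obstacle is ensuring that Lemma \ref{lemma:proj_vol_max} is sharp enough to lose nothing: since its right-hand side is a geometric mean (dominated by the maximum) of the two principal projective volumes, no extra multiplicative penalty is incurred, and the $(1+\epsilon)$ slack carries through unchanged. With that in hand the theorem reduces to bookkeeping, in complete parallel to the proof of Theorem \ref{thm:principle_locally_max}.
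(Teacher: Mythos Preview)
Your proposal is correct and follows exactly the approach the paper intends: the paper does not spell out a proof of Theorem~\ref{thm:principle_locally_max_proj_vol} but indicates that it follows by extending the argument of Theorem~\ref{thm:principle_locally_max} with Lemma~\ref{lemma:proj_vol_max} replacing \cite[Thm.~1]{CKM19}, which is precisely what you do.
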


Based on the above result
 and extending the argument of Section \ref{sect:proof_thm1}  we can show that Algorithm \ref{alg:main_alg} (Main Algorithm) finds a principal submatrix with $(1+\epsilon)$-locally maximal $r$-projective volume. 
Together  with the result of Theorem \ref{th3} 
this implies 
correctness of 
 the error bound 
 of 
 {\bf Main 
 Result 2}, and we will bound the arithmetic cost next.

Let $T$ denote the number of 
invocations of Index Update required to arrive at a submatrix 
having $(1+\epsilon)$-locally maximal $r$-projective volume 
beginning from the initial principal submatrix obtained from Algorithm \ref{alg:gaussian_pivoting} (GECP). We 
deduce
 that 
 Algorithm \ref{alg:main_alg} (Main Algorithm) has sublinear complexity by 
proving that
 $T$ is sublinear in $n$. 
Recall the well-known bound 
on the volume of the greedily selected column submatrix from \cite{GE96},
and then the claim follows readily.


\begin{lemma} (Cf. \cite[Thm. 7.2]{GE96} and \cite[Thm. 10]{CM09}.)\label{lemma:init_proj_vol_close_to_max}
Suppose that $C\in\mathbb{R}^{m\times n}$, $r$ and $K$  are integers,
 $0<r \le K < n$, and $\mathcal{I}$ is the output of 
Algorithm \ref{alg:greedy_column} (Greedy Column Subset Selection) with input $C$ and $K$. Then

\begin{equation}
	\textstyle \volk \big( C_{:, \mathcal{I}}\big) 
	\ge 2^{-r(r-1)/2}n^{-r/2} 
	\textstyle \volk \big( C\big) 
\end{equation}
\end{lemma}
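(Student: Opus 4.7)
The plan is to reduce the problem to bounding the ordinary volume of the first $r$ greedily selected columns, and then combine the Cauchy--Binet identity with the greedy approximation guarantee already stated as Theorem \ref{thm:volume_greedy}.

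First I would let $\mathcal{I}_r = \{i_1,\dots,i_r\}$ denote the first $r$ indices produced by Algorithm \ref{alg:greedy_column}. Cauchy interlacing for singular values of column submatrices (adjoining $K-r$ extra columns cannot decrease any $\sigma_j$) gives $\sigma_j(C_{:,\mathcal{I}}) \ge \sigma_j(C_{:,\mathcal{I}_r})$ for $j=1,\dots,r$, so
$$\volk(C_{:,\mathcal{I}}) \ge \volk(C_{:,\mathcal{I}_r}) = \vol(C_{:,\mathcal{I}_r}),$$
the last equality holding because $C_{:,\mathcal{I}_r}$ has exactly $r$ columns. Thus it suffices to lower-bound $\vol(C_{:,\mathcal{I}_r})$.

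Next I would invoke the Cauchy--Binet formula applied to $C_{:,\mathcal{S}}^{T}C_{:,\mathcal{S}}$ for all $r$-subsets $\mathcal{S}\subset[n]$, yielding
$$\sum_{\mathcal{S}\subset[n],\,|\mathcal{S}|=r} \vol(C_{:,\mathcal{S}})^{2} \;=\; e_r\bigl(\sigma_1^2(C),\dots,\sigma_{\min(m,n)}^2(C)\bigr),$$
where $e_r$ is the $r$-th elementary symmetric polynomial. Since all squared singular values are nonnegative, the term $\sigma_1^2\cdots\sigma_r^2 = \volk(C)^{2}$ is dominated by $e_r$, and the sum has $\binom{n}{r}\le n^r/r!$ terms, so
$$\volk(C)^{2} \;\le\; \binom{n}{r}\,\max_{|\mathcal{S}|=r}\vol(C_{:,\mathcal{S}})^{2}.$$
Applying Theorem \ref{thm:volume_greedy} to bound the globally maximal $r$-subset volume by $r!\cdot\vol(C_{:,\mathcal{I}_r})$, I then obtain
$$\volk(C_{:,\mathcal{I}}) \;\ge\; \vol(C_{:,\mathcal{I}_r}) \;\ge\; \frac{\volk(C)}{r!\sqrt{\binom{n}{r}}} \;\ge\; \frac{\volk(C)}{\sqrt{r!}\;n^{r/2}}.$$

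To finish, I would compare $r!$ with $2^{r(r-1)/2}=\prod_{j=1}^{r}2^{j-1}$ termwise: since $j\le 2^{j-1}$ for every $j\ge 1$, one has $r!\le 2^{r(r-1)/2}$ and therefore $\sqrt{r!}\le 2^{r(r-1)/4}\le 2^{r(r-1)/2}$. Substituting gives the claimed bound $\volk(C_{:,\mathcal{I}})\ge 2^{-r(r-1)/2}n^{-r/2}\,\volk(C)$. The only conceptual hurdle is the Cauchy--Binet step that relates $\volk(C)^{2}$ to the sum of squared $r$-minors; everything else is bookkeeping (interlacing, a counting bound on $\binom{n}{r}$, and the elementary inequality $r!\le 2^{r(r-1)/2}$).
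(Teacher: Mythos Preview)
Your argument is correct. The paper itself does not prove this lemma; it simply cites \cite[Thm.~7.2]{GE96} and \cite[Thm.~10]{CM09} and moves on. So there is no ``paper's own proof'' to compare against step by step.

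That said, your route differs from the one in \cite{GE96}. Gu and Eisenstat work directly with the triangular factor $R$ produced by QR with column pivoting (which is exactly the greedy selection of Algorithm~\ref{alg:greedy_column}) and exploit the structural inequalities $|R_{ii}|^2 \ge \sum_{k=i}^{j} |R_{kj}|^2$ to bound ratios of singular values, from which the $2^{-r(r-1)/2}n^{-r/2}$ factor emerges. Your argument instead stays entirely within the paper's framework: you reduce to the first $r$ greedy columns via interlacing, invoke the principal-minor identity $\sum_{|\mathcal S|=r}\det\bigl((C^{T}C)_{\mathcal S,\mathcal S}\bigr)=e_r(\sigma_1^2,\dots)$ to compare $\volk(C)$ with the maximal $r$-column volume, and then apply Theorem~\ref{thm:volume_greedy} (itself quoted from \cite{CM09}). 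This is cleaner and more self-contained relative to the present paper, and it actually yields the sharper denominator $\sqrt{r!}\,n^{r/2}$ before you relax it via $r!\le 2^{r(r-1)/2}$; the Gu--Eisenstat argument, on the other hand, gives finer information about individual singular values rather than just the product.

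Two minor remarks on presentation: the identity you call ``Cauchy--Binet'' is really the characteristic-polynomial fact that the sum of $r\times r$ principal minors of $C^{T}C$ equals $e_r$ of its eigenvalues (Cauchy--Binet is one way to see it); and you should note explicitly that if $\rank(C)<r$ then $\volk(C)=0$ and the bound is vacuous, so one may assume $\rank(C)\ge r$ throughout. Neither affects correctness.
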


Theorem \ref{thm:guassian_elemination_close_to_max} implies that $\volk(W) / \volk (W_{\mathcal{I}, \mathcal{I}} ) \ge 2^{r(r-1)}n^{r}$, where $\mathcal{I}$ is the 
initial index set obtained by Algorithm \ref{alg:gaussian_pivoting} (GECP). Recall that $\volk(W)$ is the $r$-projective volume upper bound for any submatrix of $W$,
and therefore the number  $T$ of  required
 Index Updates
 is bounded by $\log_{1+\epsilon} (2^{r(r-1)}n^{r}) = O(r^2\epsilon^{-1} + \epsilon^{-1}r\log n)$.


\section{Complexity Analysis}\label{sect:comp_anal}
In this section, we estimate the time complexity of performing the Main Algorithm (Algorithm \ref{alg:main_alg}) 
 separately in two cases where $r = K$ and  $r < K$.
 
The cost of computing the initial set $\mathcal{I}_0$ by means of Algorithm \ref{alg:gaussian_pivoting} (GECP) is $O(nK^2)$. Let $t$ denote the number of iterations and let $c(r, K)$ denote the arithmetic cost of performing Algorithm \ref{alg:index_Update} (Index Update) with parameters $r$ and $K$. Then the complexity is in $O(nK^2 + t\cdot c(r, K))$.

In the case  where $r = K$, Corollary \ref{thm:num_termination} implies that $t = O(r\epsilon^{-1}\log r)$. 
 Algorithm \ref{alg:index_Update} (Index Update)  needs up to $nr$ comparisons of $\vol \big(W_{\mathcal{I}, \mathcal{I}}\big)$ and $\vol \big(W_{\mathcal{J}, \mathcal{J}}\big)$. Since $\mathcal{I}$ and $\mathcal{J}$ differs by at most at one index,  we compute $\vol \big(W_{\mathcal{J}, \mathcal{J}}\big)$ faster by using small rank update of 
 $\vol \big(W_{\mathcal{I}, \mathcal{I}})$ instead of computing from the scratch; this saves a factor of $k$. Therefore $c(r, r) = O(r^3n)$, and the time complexity of the Main Algorithm is 
 in $O(nr^4\epsilon^{-1}\log r)$. 

Finally let $r < K$. Then according to \cite[Theorem 7.2]{GE96} and \cite[Theorem 10]{CM09}, $t$ increases slightly -- staying 
in $O(r^2\epsilon^{-1} + \epsilon^{-1}r\log n)$; 
if $\volk \big(W_{\mathcal{J}, \mathcal{J}}\big)$ is computed by using SVD, then $c(r, K) = O(K^4n)$,
and the time complexity of the Main Algorithm is in $O((r+\log n)rK^4n\epsilon^{-1})$.


\smallskip 

\medskip

\medskip

{\bf \large PART III. CUR LRA BY MEANS OF C-A ITERATIONS}


\section{Overview}\label{svlmxmz}
  
   
In the next section  we show that 
already two successive C-A iterations output a CUR generator having $h$-maximal volume (for any $h>1$) if these iterations begin at 
a $p\times q$ submatrix of $W$ that shares its rank $r>0$ with $W$. By continuity of the volume the result is extended to perturbations of such matrices  within a norm bound estimated in Theorem \ref{thprtrbvlm}. 
   In Section \ref{sprdv_via_v}  we extend these results to the case where $r$-projective volume rather than the volume of  a   CUR generator is  maximized; Theorem \ref{th3} shows benefits of
such a maximization.
 In Section \ref{scurac} we
 estimate the complexity and accuracy  of a two-step loop of C--A iterations. In Sections \ref{sextpr}  and \ref{sHSS}
we demonstrate the power of C-A iterations by means of the acceleration of the FMM method to superfast level, that is, to performing it at sublinear cost.

   
\section{Volume of the Output of a C--A loop}\label{svlmcalp}


We can apply 
C--A steps  by choosing deterministic algorithms of \cite{GE96} for 
Sub-algorithm $\mathcal A$. In this case
  $mq$ and $pn$ memory cells and $O(mq^2)$
  and $O(p^2n)$ arithmetic operations are involved in
``vertical" and  ``horizontal" C-A iterations,
respectively. They run at sublinear cost if 
$p^2=o(m)$ and $q^2=o(n)$ and 
output submatrices having $h$-maximal volumes for $h$ being a low 
degree polynomial in $m+n$.
Every iteration outputs a matrix that has locally $h$-maximal volume in a ``vertical"
or ``horizontal" submatrix, and we hope to obtain globally $\bar h$-maximal submatrix
(for reasonably bounded $\bar h$) when maximization is performed recursively in alternate directions.

 
We readily prove that this expectation is true for a low rank input matrix and  hence, by virtue of Theorem \ref{thprtrbvlm},  for a matrix that admits its sufficiently close LRA.\footnote{In our proof we assume that a C-A step is not applied to $p\times n$ or $m\times  q$ input of volume 0. This is hard to ensure for the input families of our example in the Appendix, but realistically 
such a degeneracy is rare, and one can try to counter it by means of pre-processing 
$W\rightarrow FWH$ with random sparse orthogonal multipliers $F$ and $H$.} The error bound of the LRA deviates from optimal by the factor $\bar h$ even in  Chebyshev norm,  
    but in spite of this deficiency (see some remedy at the end of Section 1.1)
and a strong restriction on the input class, we still yield some limited
 formal support for the long-known  empirical efficiency of C-A iterations.

Let us next elaborate upon this support.

By comparing SVDs of the matrices $W$ and $W^+$  obtain the following lemma.

\begin{lemma}\label{le0} 
$\sigma_j(W)\sigma_{\rank (W)+1-j}(W^+)=1$ for 
 all matrices $W$ and all integers $j$ not exceeding 
 $\rank(W)$.
\end{lemma}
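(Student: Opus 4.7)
The plan is to prove Lemma~\ref{le0} by a direct comparison of SVDs, exactly along the lines suggested by the sentence preceding it. The work is essentially bookkeeping: read off the singular values of $W^{+}$ from the given explicit factorization and match them up with those of $W$ in reverse order.

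First I would write the compact SVD of $W$ exactly as in~(\ref{eqsvd}): $W=S_W\Sigma_W T_W^{*}$ with $\Sigma_W=\diag(\sigma_j(W))_{j=1}^{\rho}$, $\rho=\rank(W)$, and $\sigma_1(W)\ge\sigma_2(W)\ge\cdots\ge\sigma_{\rho}(W)>0$, while $S_W^{*}S_W=T_W^{*}T_W=I_{\rho}$. Then by the definition stated in Section~\ref{sgnm}, $W^{+}=T_W\Sigma_W^{-1}S_W^{*}$, where $\Sigma_W^{-1}=\diag\!\bigl(1/\sigma_j(W)\bigr)_{j=1}^{\rho}$.

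Next I would convert this factorization into an honest compact SVD of $W^{+}$. The diagonal entries $1/\sigma_j(W)$ are in non-decreasing order, so they are not yet arranged as singular values. Introduce the $\rho\times\rho$ reversal permutation matrix $P$ with $P_{i,\rho+1-i}=1$, which is orthogonal and satisfies $P^{*}\Sigma_W^{-1}P=\diag\!\bigl(1/\sigma_{\rho+1-k}(W)\bigr)_{k=1}^{\rho}$. Inserting $PP^{*}=I_{\rho}$ twice yields
\begin{equation*}
W^{+}=(T_W P)\bigl(P^{*}\Sigma_W^{-1}P\bigr)(P^{*}S_W^{*})=\widetilde{S}\,\widetilde{\Sigma}\,\widetilde{T}^{*},
\end{equation*}
where $\widetilde{S}=T_W P$, $\widetilde{T}=S_W P$ are orthogonal and $\widetilde{\Sigma}$ is diagonal with strictly positive, non-increasing entries. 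Hence this is the compact SVD of $W^{+}$, and $\sigma_k(W^{+})=1/\sigma_{\rho+1-k}(W)$ for $k=1,\dots,\rho$.

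Finally, I would substitute $k=\rho+1-j$ (valid since $1\le j\le\rho$ iff $1\le k\le\rho$) to obtain $\sigma_{\rho+1-j}(W^{+})=1/\sigma_j(W)$, which rearranges to the claimed identity $\sigma_j(W)\,\sigma_{\rank(W)+1-j}(W^{+})=1$. There is no real obstacle here; the only point that needs care is the permutation step that reorders the reciprocals into decreasing order so that one can legitimately identify them as singular values of $W^{+}$.
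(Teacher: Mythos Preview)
Your proof is correct and follows exactly the approach the paper indicates, namely comparing the SVDs of $W$ and $W^{+}$; the paper itself does not spell out the details beyond that one-line hint. Your explicit use of the reversal permutation $P$ to reorder the reciprocal singular values is the standard bookkeeping step and is fully justified.
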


\begin{corollary}\label{co0} 
$v_2(W)v_2(W^+)=1$ 
and $v_{2,r}(W)v_{2,r}(W_r^+)=1$
for all  matrices $W$ of full rank and all integers $r$ such that $1\le r \le \rank(W)$.
\end{corollary}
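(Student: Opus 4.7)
The plan is to derive both identities as immediate consequences of Lemma \ref{le0} together with the definition of the ($r$-projective) volume in Definition \ref{defmxv}, since each volume is just a product of singular values and Lemma \ref{le0} pairs the singular values of $W$ with those of $W^+$ reciprocally.

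First I would treat the identity $v_2(W)v_2(W^+)=1$. Assume $W$ is $k\times l$ of full rank, so that $\rho:=\rank(W)=\min\{k,l\}$ and hence $v_2(W)=\prod_{j=1}^{\rho}\sigma_j(W)$. Since $W^+$ is $l\times k$ and also has rank $\rho$, we likewise get $v_2(W^+)=\prod_{j=1}^{\rho}\sigma_j(W^+)$. Multiplying and re-indexing the second product via $j\mapsto \rho+1-j$ gives
\[
v_2(W)\,v_2(W^+)=\prod_{j=1}^{\rho}\sigma_j(W)\sigma_{\rho+1-j}(W^+),
\]
which equals $1$ by Lemma \ref{le0}.

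Next I would handle $v_{2,r}(W)\,v_{2,r}(W_r^+)=1$. Let $W_r$ denote the rank-$r$ truncation of $W$, so that $\sigma_j(W_r)=\sigma_j(W)$ for $j\le r$ and $\sigma_j(W_r)=0$ otherwise; in particular $\rank(W_r)=r$ (using $r\le\rank(W)$ and that the first $r$ singular values of $W$ are strictly positive because $W$ has full rank). By Lemma \ref{le0} applied to $W_r$, its nonzero singular values, arranged in decreasing order, satisfy $\sigma_j(W_r^+)=1/\sigma_{r+1-j}(W_r)=1/\sigma_{r+1-j}(W)$ for $j=1,\dots,r$. Hence
\[
v_{2,r}(W_r^+)=\prod_{j=1}^{r}\sigma_j(W_r^+)=\prod_{j=1}^{r}\frac{1}{\sigma_{r+1-j}(W)}=\frac{1}{\prod_{j=1}^{r}\sigma_j(W)}=\frac{1}{v_{2,r}(W)},
\]
and multiplying through yields the second claim.

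There is no real obstacle here: the only subtle point is making sure that $W$ and $W^+$ (resp.\ $W_r$ and $W_r^+$) share the same rank, so that the product of singular values runs over the same range of indices on both sides; this is exactly what ``full rank'' and the assumption $r\le\rank(W)$ guarantee, and Lemma \ref{le0} then supplies the reciprocal pairing that makes the product telescope to $1$.
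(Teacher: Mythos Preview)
Your proof is correct and follows exactly the route the paper intends: the paper states Corollary \ref{co0} as an immediate consequence of Lemma \ref{le0} without spelling out a proof, and your argument---multiplying the two products of singular values and using the reciprocal pairing $\sigma_j(W)\sigma_{\rank(W)+1-j}(W^+)=1$ from Lemma \ref{le0}---is precisely that derivation made explicit. Your handling of the second identity via the rank-$r$ truncation $W_r$ (so that Lemma \ref{le0} applies with $\rank(W_r)=r$) is the natural reading of the notation $W_r^+$ and is consistent with the paper's conventions.
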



We are ready to prove that a  $k\times l$ submatrix of rank $r$   has  $hh'$-maximal volume globally in 
 a rank-$r$
   matrix $W$, that is,
  over all  $k\times l$ submatrices of $W$, if it has
$(h,h')$ maximal  
nonzero volume in  this
   matrix $W$.

\begin{theorem}\label{th111} 
Suppose that the volume 
 of a $k\times l$ submatrix  
$W_{\mathcal I,\mathcal J}$ 
is  nonzero and
 $(h,h')$-maximal  in a  matrix $W$
  for $h\ge 1$ 
 and $h'\ge 1$
 where 
 $\rank(W)=r=\min\{k,l\}$.
 Then this volume is
$hh'$-maximal  
 over all its $k\times l$ submatrices of
 the matrix $W$.
\end{theorem}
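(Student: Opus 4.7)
By symmetry (replace $W$ with $W^*$ if necessary) I may assume $k\le l$, so that $r=k$. The goal is to show that for any $k\times l$ submatrix $W_{\mathcal I',\mathcal J'}$ of $W$, one has $v_2(W_{\mathcal I',\mathcal J'})\le hh'\,v_2(W_{\mathcal I,\mathcal J})$. The plan is to exploit the rank constraint $\rank(W)=k$ to derive a multiplicative identity that lets me ``interpolate'' between $W_{\mathcal I,\mathcal J}$ and an arbitrary $W_{\mathcal I',\mathcal J'}$ through either of the two intermediate submatrices $W_{\mathcal I',\mathcal J}$ and $W_{\mathcal I,\mathcal J'}$.

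First I would record the structural consequence of the rank hypothesis. Because $v_2(W_{\mathcal I,\mathcal J})\ne 0$, the $k$ rows of $W_{\mathcal I,:}$ are linearly independent, and since $\rank(W)=k$ they span the entire row space of $W$. Hence for any index set $\mathcal I'\subseteq\{1,\dots,m\}$ with $|\mathcal I'|=k$ there is a unique $k\times k$ matrix $X=X(\mathcal I')$ with
\[
W_{\mathcal I',:}=X\,W_{\mathcal I,:},\qquad\text{and in particular}\qquad W_{\mathcal I',\mathcal S}=X\,W_{\mathcal I,\mathcal S}\text{ for every column set }\mathcal S.
\]
For any $k\times l$ matrix $B$ with $k\le l$, Definition \ref{defmxv} gives $v_2(XB)=\sqrt{\det(XBB^*X^*)}=|\det X|\,v_2(B)$. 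Applying this with $\mathcal S=\mathcal J$ and $\mathcal S=\mathcal J'$ yields the two identities
\[
v_2(W_{\mathcal I',\mathcal J})=|\det X|\,v_2(W_{\mathcal I,\mathcal J}),\qquad v_2(W_{\mathcal I',\mathcal J'})=|\det X|\,v_2(W_{\mathcal I,\mathcal J'}).
\]

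Dividing the second identity by the first (which is legal because $v_2(W_{\mathcal I,\mathcal J})>0$ by hypothesis, and the case $\det X=0$ forces both numerators to vanish and the conclusion becomes trivial) eliminates $|\det X|$ and gives the ratio identity
\[
\frac{v_2(W_{\mathcal I',\mathcal J'})}{v_2(W_{\mathcal I,\mathcal J})}=\frac{v_2(W_{\mathcal I',\mathcal J})}{v_2(W_{\mathcal I,\mathcal J})}\cdot\frac{v_2(W_{\mathcal I,\mathcal J'})}{v_2(W_{\mathcal I,\mathcal J})}.
\]
Now I plug in the hypotheses: row-wise $h'$-maximality (the submatrix $W_{\mathcal I,\mathcal J}$ has largest volume up to a factor of $h'$ among $k\times l$ submatrices of $W_{:,\mathcal J}$) bounds the first factor by $h'$, and column-wise $h$-maximality bounds the second factor by $h$. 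Multiplying these bounds produces $v_2(W_{\mathcal I',\mathcal J'})\le hh'\,v_2(W_{\mathcal I,\mathcal J})$, which is exactly the claim.

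The only real obstacle is justifying the scaling identity $v_2(XB)=|\det X|\,v_2(B)$ for a rectangular $B$; this is a direct consequence of the determinantal formula for $v_2$ in Definition \ref{defmxv}, and the remaining steps are bookkeeping. The proof essentially says that, once the rank collapses to $\min\{k,l\}$, the volume of a $k\times l$ submatrix factorizes multiplicatively over the row and column choices, so local column-wise and row-wise maxima combine into a global maximum with the product constant $hh'$.
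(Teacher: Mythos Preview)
Your proof is correct. Both your argument and the paper's arrive at the key relation
\[
v_2(W_{\mathcal I',\mathcal J'})\cdot v_2(W_{\mathcal I,\mathcal J})\le v_2(W_{\mathcal I',\mathcal J})\cdot v_2(W_{\mathcal I,\mathcal J'})
\]
and then apply the two maximality hypotheses; the difference lies in how that relation is reached. The paper invokes the CUR identity $W_{\mathcal I',\mathcal J'}=W_{\mathcal I',\mathcal J}\,W_{\mathcal I,\mathcal J}^{+}\,W_{\mathcal I,\mathcal J'}$ and then appeals to Theorem~\ref{thvolfctrg} (volume of a product) together with Corollary~\ref{co0} (volume of a pseudo-inverse), handling $k\le l$ and $k>l$ separately. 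Your route is more elementary: you use the rank constraint to write $W_{\mathcal I',:}=X\,W_{\mathcal I,:}$ for a square $X$, and the one-line determinant identity $v_2(XB)=|\det X|\,v_2(B)$ lets you cancel $|\det X|$ directly. This avoids the product/pseudo-inverse machinery entirely and in fact yields an \emph{equality} in the displayed relation rather than just an inequality, making transparent the multiplicative factorization of $v_2$ over row and column choices when $\rank(W)=\min\{k,l\}$. The paper's route, in exchange, is phrased in terms of tools (Theorem~\ref{thvolfctrg}, Corollary~\ref{co0}) that carry over to the $r$-projective setting of Section~\ref{sprdv_via_v}.
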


\begin{proof} 
  The matrix 
  $W_{\mathcal I,\mathcal J}$ has full rank because its volume is nonzero.
  
 Fix any $k\times l$ submatrix 
 $W_{\mathcal I',\mathcal J'}$ of the matrix $W$, recall that $W=CUR$,  and
 obtain that 
$$W_{\mathcal I',\mathcal J'}=
W_{\mathcal I',\mathcal J}W_{\mathcal I,\mathcal J}^{+}W_{\mathcal I,\mathcal J'}.$$
 
If $k\le l$, then
first apply claim (iii) of Theorem \ref{thvolfctrg}  for $G:=W_{\mathcal I',\mathcal J}$  and $H:=W_{\mathcal I,\mathcal J}^{+}$;
 then apply claim (i)
of that theorem for $G:=W_{\mathcal I',\mathcal  J}W_{\mathcal I,\mathcal J}^{+}$
and $H:=W_{\mathcal I,\mathcal J'}$
and obtain that 
$$v_2(W_{\mathcal I',\mathcal J})=v_2(W_{\mathcal I',\mathcal J}W_{\mathcal I,\mathcal J}^{+}W_{\mathcal I,\mathcal J'})\le  
v_2(W_{\mathcal I',\mathcal J})v_2(W_{\mathcal I,\mathcal J}^{+})v_2(W_{\mathcal I,\mathcal J'}).$$
 
If $k>l$ deduce the same bound by applying
the same argument to the matrix equation
$$W_{\mathcal I',\mathcal J'}^T=
W_{\mathcal I,\mathcal J'}^TW_{\mathcal I,\mathcal J}^{+T}W_{\mathcal I',\mathcal J}^T.$$

Combine this bound with Corollary \ref{co0} for $W$ replaced by 
$W_{\mathcal I,\mathcal J}$ and deduce that
 \begin{equation}\label{eqloop1} 
 v_2(W_{\mathcal I',\mathcal J'})
 =v_2(W_{\mathcal I',\mathcal J}W_{\mathcal I,\mathcal J}^{+}W_{\mathcal I,\mathcal J'})\le v_2(W_{\mathcal I',\mathcal J})v_2 (W_{\mathcal I,\mathcal J'})/v_2(W_{\mathcal I,\mathcal J}).  
\end{equation}
Recall that the matrix $W_{\mathcal I,\mathcal J}$
is $(h,h')$-maximal
and conclude that
$$hv_2(W_{\mathcal I,\mathcal J})\ge v_2(W_{\mathcal I,\mathcal J'})~{\rm 
and}~ 
h'v_2(W_{\mathcal I,\mathcal J})\ge v_2(W_{\mathcal I',\mathcal J}).$$ 

Substitute these inequalities into the above bound on the volume $v_2(W_{\mathcal I',\mathcal J'})$ and obtain
that $v_2(W_{\mathcal I',\mathcal J'})\le hh' v_2(W_{\mathcal I,\mathcal J})$.
\end{proof}


\section{From the Maximal Volume to the Maximal $r$-Projective Volume}\label{sprdv_via_v}

  
Recall that the CUR LRA error bound of  Theorem \ref{th12} is strengthened when we shift to Theorem \ref{th3},
that is,  maximize 
$r$-projective volume for $r<k=l$ rather than the volume.
  Next 
 we reduce maximization of 
 $r$-projective  volume of a CUR
  generators  to volume
  maximization.

Recall that multiplication by a square orthogonal matrix changes neither the volume nor $r$-projective volume of a matrix and obtain the following result. 
 
 \begin{lemma}\label{leprwnmp} 
 Let $M$ and $N$ be a pair of $k\times l$
 submatrices of a $k\times n$ matrix
 and let $Q$ be a $k\times k$
 orthogonal matrix.
 Then 
 $v_2(M)/v_2(N)=v_2(QM)/v_2(QN)$,
and if $r\le \min\{k,l\}$ then
also 
$v_{2,r}(M)/v_{2,r}(N)=
v_{2,r}(QM)/v_{2,r}(QN)$.
\end{lemma}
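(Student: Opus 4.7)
\medskip

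The plan is to observe that the whole lemma reduces to the fact that left-multiplication by a square orthogonal matrix preserves singular values individually, so it preserves both $v_2$ and $v_{2,r}$ of any single matrix; the ratio statement then follows trivially.

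First I would fix any $k\times l$ matrix $A$ and note that, since $Q$ is $k\times k$ orthogonal, we have $Q^*Q=I_k$, hence
\[
(QA)^*(QA)=A^*Q^*QA=A^*A.
\]
The singular values of a matrix are the (positive) square roots of the eigenvalues of $A^*A$, so $\sigma_j(QA)=\sigma_j(A)$ for every $j$. By Definition \ref{defmxv} this gives $v_2(QA)=v_2(A)$, and whenever $r\le\min\{k,l\}$ also $v_{2,r}(QA)=v_{2,r}(A)$.

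Next I would apply this identity to both $M$ and $N$: $v_2(QM)=v_2(M)$ and $v_2(QN)=v_2(N)$, and similarly for the $r$-projective volumes. Dividing the two equalities (interpreting the ratio formally when the denominator vanishes, in which case both denominators vanish simultaneously and there is nothing to prove) yields
\[
\frac{v_2(M)}{v_2(N)}=\frac{v_2(QM)}{v_2(QN)},\qquad
\frac{v_{2,r}(M)}{v_{2,r}(N)}=\frac{v_{2,r}(QM)}{v_{2,r}(QN)},
\]
which is exactly the claim.

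There is essentially no obstacle: the only thing to be a bit careful about is the convention when a denominator is zero, but this is easily handled since $Q$ is invertible and so $v_2(N)=0 \iff v_2(QN)=0$ (and the same for $v_{2,r}$). The hypothesis that $M$ and $N$ are submatrices of a common $k\times n$ matrix plays no role in the proof; it is only contextual, reflecting how the lemma will be used in the reduction of projective-volume maximization to volume maximization in the following section.
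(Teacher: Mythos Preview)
Your proof is correct and follows exactly the paper's approach: the paper simply states that multiplication by a square orthogonal matrix preserves singular values and hence both the volume and the $r$-projective volume, from which the lemma is immediate. Your write-up merely fills in the one-line computation $(QA)^*(QA)=A^*A$ and the observation about vanishing denominators that the paper leaves implicit.
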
 


\begin{algorithm}[t]
\caption{From maximal volume to maximal $r$-projective volume}\label{algprjvlm}
\begin{quote}
    {\bf Input:} Four integers $k,l,n,$ and $r$ such that $0<r\le k\le n$
    and $r\le l \le n$; a $k\times n$ matrix $W$ of rank $r$; a black
    box algorithm that finds an $r\times l$ submatrix having locally
    maximal volume in an $r\times n$ matrix of full rank $r$.
    \\
    {\bf Output:} 
    A column set $\mathcal J$ such that $W_{:, \mathcal J}$ has
    maximal $r$-projective volume in $W$.
    \\ {\bf Computations:}
    \begin{algorithmic}
    \STATE 1. Compute a rank-revealing QRP factorization
    $W = QRP$, where $Q$ is an orthogonal matrix, $P$ is a permutation matrix,
    $R = \begin{pmatrix} R' \\ O \end{pmatrix}$, and 
    $R'$ is an $r\times n$ matrix.  (See \cite[Sections 5.4.3 and 5.4.4]{GL13} and \cite{GE96}.)
    \STATE 2. Compute an $r\times l$ submatrix $R'_{:, \mathcal J}$ of
    $R'$ having maximal volume.
    \RETURN $\mathcal J'$ such that $P: \mathcal J' \longrightarrow \mathcal J$. 
    \end{algorithmic}
\end{quote}
\end{algorithm}

The 
 submatrices $R'$ and
$\begin{pmatrix} R' \\O
\end{pmatrix}$ of the matrix $R$
of Algorithm \ref{algprjvlm} have  maximal volume and
 maximal 
$r$-projective volume in  the matrix $R$,
respectively, by virtue of Theorem \ref{thvolfctrg} and  
because $v_2(R)=v_{2,r}(R)=v_{2,r}(R')$. Therefore
the submatrix $W_{:,\mathcal J}$
has maximal $r$-projective volume in  the matrix $W$ by virtue of Lemma  \ref{leprwnmp}. 

 \begin{remark}\label{reprvtov} 
 By transposing a horizontal input matrix $W$ and interchanging the integers $m$ and $n$ and 
the integers $k$ and $l$ we extend the algorithm to computing
a $k\times l$ submatrix of maximal or nearly  maximal $r$-projective volume in  an $m\times l$ matrix of rank $r$.
\end{remark} 

   
\section{Complexity and Accuracy of an C--A Loop}\label{scurac}
    

The following theorem summarizes our study in this part of the paper.
 

\begin{theorem}\label{coc-avlm}
Given five integers $k$, 
 $l$,  $m$, $n$, and  $r$ such that
  $0<r\le k\le m$
 and  $r\le l\le n$, suppose that
  two successive C--A steps (say, based on the
  algorithms of 
\cite{GE96})
combined with Algorithm    
 \ref{algprjvlm}
  have been applied  
to an $m\times n$ matrix $W$ of rank $r$ and  have  output  a pair of
   $k\times l$ submatrices 
  $W'_{1}$ and   
  $W'_{2}=W_{\mathcal I_2,\mathcal J_2}$
   with nonzero $r$-projective
column-wise locally $h$-maximal and
nonzero $r$-projective row-wise locally  $h'$-maximal volumes, respectively.
Then  the submatrix $W_2'$
 has $h' h$-maximal $r$-projective volume in the matrix $W$.
\end{theorem}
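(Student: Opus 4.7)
The two C--A steps of the hypothesis share a common column index set: the first (horizontal) step produces $W_1' = W_{\mathcal{I}_1, \mathcal{J}_2}$ with column-wise $h$-maximal $r$-projective volume in its row strip $W_{\mathcal{I}_1, :}$, and the second (vertical) step produces $W_2' = W_{\mathcal{I}_2, \mathcal{J}_2}$ with row-wise $h'$-maximal $r$-projective volume in its column strip $W_{:, \mathcal{J}_2}$. The plan is to lift the proof of Theorem \ref{th111} to the $r$-projective setting and then transfer the column-wise maximality of $W_1'$ through the shared column set $\mathcal{J}_2$ to $W_2'$, making $W_2'$ two-sidedly $(h, h')$-maximal so that the lifted theorem applies.

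The core tool is the CUR identity. The nonzero-volume hypothesis forces $\operatorname{rank}(W_1') = r$; writing $W = AB$ with $A \in \mathbb{R}^{m\times r}$ and $B \in \mathbb{R}^{r\times n}$ of full rank $r$ then forces $A_{\mathcal{I}_1, :}$ and $B_{:, \mathcal{J}_2}$ to have rank $r$ as well, so $W = W_{:, \mathcal{J}_2}\, W_{\mathcal{I}_1, \mathcal{J}_2}^{+}\, W_{\mathcal{I}_1, :}$, and restricting to arbitrary index sets $\mathcal{I}'$ and $\mathcal{J}'$ of the appropriate sizes gives
\[
W_{\mathcal{I}', \mathcal{J}'} = W_{\mathcal{I}', \mathcal{J}_2}\; W_{\mathcal{I}_1, \mathcal{J}_2}^{+}\; W_{\mathcal{I}_1, \mathcal{J}'}.
\]
Applying the submultiplicativity of $r$-projective volume from Theorem \ref{thvolfctrg}(ii) twice and using Corollary \ref{co0} in the form $v_{2,r}(W_{\mathcal{I}_1, \mathcal{J}_2}^{+}) = 1/v_{2,r}(W_{\mathcal{I}_1, \mathcal{J}_2})$ (valid since $W_1'$ has rank exactly $r$) yields the central inequality
\[
v_{2,r}(W_{\mathcal{I}', \mathcal{J}'}) \;\le\; \frac{v_{2,r}(W_{\mathcal{I}', \mathcal{J}_2})\; v_{2,r}(W_{\mathcal{I}_1, \mathcal{J}'})}{v_{2,r}(W_{\mathcal{I}_1, \mathcal{J}_2})}.
\]

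Specializing $\mathcal{I}' = \mathcal{I}_2$ and invoking the column-wise $h$-maximality of $W_1'$ to bound $v_{2,r}(W_{\mathcal{I}_1, \mathcal{J}'})$ by $h\, v_{2,r}(W_{\mathcal{I}_1, \mathcal{J}_2})$ collapses the inequality to $v_{2,r}(W_{\mathcal{I}_2, \mathcal{J}'}) \le h\, v_{2,r}(W_{\mathcal{I}_2, \mathcal{J}_2})$ for every admissible $\mathcal{J}'$; thus $W_2'$ is column-wise $h$-maximal in its own row strip $W_{\mathcal{I}_2, :}$. Combined with its given row-wise $h'$-maximality, $W_2'$ is therefore $(h, h')$-maximal in the $r$-projective sense. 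Re-anchoring the same factorization at $W_2'$ (which also has rank exactly $r$ by the same argument) and running the central inequality once more, now bounding both numerator factors by the column-wise and row-wise maximality of $W_2'$, yields $v_{2,r}(W_{\mathcal{I}', \mathcal{J}'}) \le h\,h'\, v_{2,r}(W_{\mathcal{I}_2, \mathcal{J}_2})$ for arbitrary $\mathcal{I}'$ and $\mathcal{J}'$, which is exactly the claimed global $h'h$-maximality.

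The main place where the argument could slip is rank bookkeeping: Theorem \ref{thvolfctrg}(ii) requires the inner dimension of each triple product to be at least $r$ (true here because $k, l \ge r$), and the pseudoinverse identity of Corollary \ref{co0} requires $W_1'$ and $W_2'$ to have rank exactly $r$ so that their Moore--Penrose inverses coincide with their rank-$r$ truncated pseudoinverses. Both conditions follow from $\operatorname{rank}(W) = r$ together with the nonzero $r$-projective-volume hypotheses, since every $k\times l$ submatrix of $W$ has rank at most $r$ and the hypothesis promotes this to equality for $W_1'$ and (once column-wise maximality has been established) for $W_2'$. Once this is settled, the proof is essentially the $r$-projective analog of Theorem \ref{th111}, with the shared column set $\mathcal{J}_2$ serving as the bridge that transmits column-wise maximality from $W_1'$ across the two C--A steps to $W_2'$.
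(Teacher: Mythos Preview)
Your proof is correct and makes explicit what the paper leaves implicit. The paper states Theorem \ref{coc-avlm} without a written proof, intending it as a summary of Sections \ref{svlmcalp}--\ref{sprdv_via_v}: its route is to reduce $r$-projective volume maximization to ordinary volume maximization via the QRP factorization of Algorithm \ref{algprjvlm} and Lemma \ref{leprwnmp}, and then invoke Theorem \ref{th111}. You instead lift the proof of Theorem \ref{th111} directly to $r$-projective volumes using the submultiplicativity of Theorem \ref{thvolfctrg}(ii), bypassing the QRP machinery entirely. This is cleaner and more self-contained. You also spell out the transfer step---showing that $W_2'$ inherits column-wise $h$-maximality from $W_1'$ through the shared column set $\mathcal{J}_2$---which is needed to place $W_2'$ in the two-sided $(h,h')$-maximal form the argument requires, and which the paper does not write down.

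Two small caveats. First, Corollary \ref{co0} as stated assumes full rank, whereas your $W_{\mathcal{I}_1,\mathcal{J}_2}$ is $k\times l$ of rank $r$, possibly with $r<\min\{k,l\}$; the identity $v_{2,r}(M^{+})=1/v_{2,r}(M)$ you use still holds for any rank-$r$ matrix $M$ directly from Lemma \ref{le0}, so this is only a citation issue. Second, both your argument and the paper's Theorem \ref{th111} actually need strip-wise (global) column- and row-wise $h$-maximality, not merely the \emph{local} version the theorem statement literally asserts; since the algorithms of \cite{GE96} deliver the global strip-wise bound, the argument stands, but you are silently reading ``locally'' as ``globally within the strip,'' just as the paper does.
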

 

 By combining
 Theorems \ref{th12},
\ref{th3}, and \ref{coc-avlm} we obtain the following corollary.  

\begin{corollary}\label{colclgl}
Under the assumptions of Theorem 
\ref{coc-avlm}
apply a two-step  C--A loop to an $m\times n$  matrix $W$ of rank $r$ 
and suppose that both its C--A steps 
output $k\times l$  submatrices having
nonzero $r$-projective column-wise and row-wise locally $h$-maximal volumes (see Remark \ref{reklprtr} below).
 Build
 a canonical CUR LRA  on
a CUR generator $W_2'=W_{k,l}$
of rank $r$ output by the second C--A step. 
Then 

(i) the computation of this CUR LRA by using 
the auxiliary algorithms of \cite{GE96} 
involves $(m+n)r$ 
memory cells and $O((m+n)r^2)$ arithmetic operations\footnote{For $r=1$ an input matrix
turns into a vector of dimension $m$ or $n$, and then we compute 
its absolutely maximal coordinate just by applying $m-1$
or $n-1$ comparisons, respectively (cf. \cite{O17}).}
and 

(ii) the error matrix $E$ of the output  
CUR LRA satisfies the bound
$||E||_C\le g(k,l,r)~\bar h~\sigma_{r+1}(W)$
for 
$\bar h$ of Theorem  
\ref{coc-avlm} and $g(k,l,r)$
denoting the functions $f(k,l)$
of Theorem \ref{th12}
or  $f(k,l,r)$
of Theorem \ref{th3}. In particular
$||E||_C\le 2hh' \sigma_{2}(W)$
for $k=l=r=1$.
\end{corollary}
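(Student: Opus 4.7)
The proof will be a short assembly of previously stated results: Theorem \ref{coc-avlm} already did the structural work of promoting the output of two successive C--A steps from locally to globally nearly maximal $r$-projective volume, so what remains is to feed that output into the volume-to-CUR-error dictionary of Theorems \ref{th12} and \ref{th3} and to tally the cost of the two rank-revealing subroutines from \cite{GE96}.

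For part (ii), the plan is to start from the conclusion of Theorem \ref{coc-avlm}: the second C--A step returns a $k\times l$ submatrix $W_2'$ whose $r$-projective volume is $hh'$-maximal in $W$ \emph{globally}. Global $hh'$-maximality is a fortiori local $hh'$-maximality of the sort required on input to Theorem \ref{th3}. Substituting $h \rightsquigarrow hh'$ in Theorem \ref{th3} yields
\[
\|E\|_C \le hh'\, f(k,l,r)\,\sigma_{r+1}(W),
\]
while the analogous substitution in Theorem \ref{th12} yields the same bound with $f(k,l)$ in place of $f(k,l,r)$ in the regime where the two volume notions coincide (namely $r=\min\{k,l\}$). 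For the special case $k=l=r=1$, Theorem \ref{th3} gives $f(1,1,1)=\sqrt{(1+1)(1+1)/((1-0)(1-0))}=2$, producing exactly $\|E\|_C \le 2hh'\,\sigma_2(W)$ as claimed.

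For part (i), I would account separately for memory and for arithmetic. Each C--A step of \cite{GE96} is fed either a horizontal $p\times n$ or a vertical $m\times q$ slice of $W$ whose rank is at most $r$; in the regime $k=l=r$ storing such a slice consumes $O(mr)$ or $O(nr)$ cells, so the two combined steps reference at most $(m+n)r$ cells. On the arithmetic side, a rank-revealing QR-type search for a maximal-volume $r$-subset of columns of an $m\times r$ matrix costs $O(mr^2)$, and the symmetric row-oriented step costs $O(nr^2)$, for a total of $O((m+n)r^2)$. In the degenerate case $r=1$ the ``maximal-volume'' subroutine collapses to finding the coordinate of largest modulus in a vector, i.e., $m-1$ or $n-1$ comparisons, matching the bound trivially (and exactly as noted in the footnote).

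The one piece of bookkeeping that requires care, rather than any genuine obstacle, is reconciling the compact statement $g(k,l,r)\,\bar h\,\sigma_{r+1}(W)$ with the two different functions it represents: $g=f(k,l)$ comes from Theorem \ref{th12} (full volume) and $g=f(k,l,r)$ from Theorem \ref{th3} (projective volume), and the proof should make explicit which of the two is invoked in which regime. With that distinction recorded, the proof reduces to the single substitution $\bar h := hh'$ from Theorem \ref{coc-avlm} into the appropriate one of Theorems \ref{th12}--\ref{th3}, together with the standard complexity accounting above.
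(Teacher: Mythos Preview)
Your proposal is correct and follows essentially the same approach as the paper, which proves the corollary in a single sentence: ``By combining Theorems \ref{th12}, \ref{th3}, and \ref{coc-avlm} we obtain the following corollary.'' Your write-up simply makes that combination explicit---global $hh'$-maximality from Theorem \ref{coc-avlm} is fed as local $hh'$-maximality into Theorems \ref{th12}/\ref{th3}, and the complexity of the \cite{GE96} subroutines is tallied just as the paper itself does earlier in Section \ref{svlmcalp}.
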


 
\begin{remark}\label{reklprtr}
Theorem \ref{thprtrbvlm}  enables us to
 extend 
  Theorem \ref{coc-avlm} and 
Corollary \ref{colclgl} 
 to the case of an input matrix $W$ of
 numerical rank $r$ as long as a C-A step begins with a non-degenerating input matrix sharing its numerical rank $r$ with an input matrix $W$.
\end{remark}  



\section{Superfast Multipole Method}\label{sextpr} 
 

Sublinear cost LRA algorithms can be extended to numerous important computational problems linked to LRA.
Next we point out and
    extensively test a
    simple 
    application to the  acceleration of the celebrated Fast Multipole Method (FMM)
    (cf. \cite{GR87},  \cite{CGR88}, \cite{DGR96}, \cite{C00}, \cite{BY13}, and the bibliography therein),
    which  turns it into {\em Superfast Multipole Method}. 
 


%

FMM enables  multiplication  by a vector of so
called  HSS matrices at sublinear cost 
provided that low rank generators are available for its off-diagonal blocks.
Such generators can be a part of    the input, and it is frequently not emphasized that they are not available  in a variety of other  important applications of FMM (see, e.g., \cite{XXG12}, 
 \cite{XXCB14},  \cite{P15}).
 Even in such cases, however, by applying ACA  algorithms of \cite{B00},  \cite{B11} or the MaxVol algorithm of \cite{GOSTZ10} we can compute the generators 
at sublinear cost, thus turning FMM into Superfast 
 Multipole Method.  
 Next
we supply some details.                                                                                                                                                                                                               
 
 HSS matrices  
 naturally extend the class of banded matrices and their inverses, 
are closely linked to FMM,  
 and 
are increasingly popular.
See \cite{BGH03},  \cite{GH03}, \cite{MRT05},
 \cite{CGS07},
 \cite{VVGM05}, 
\cite{VVM07/08}, 
 \cite{B10}, \cite{X12},   \cite{XXG12},
\cite{EGH13}, \cite{X13},
 \cite{XXCB14},
  the references therein,
   software libraries 
H2Lib, http://www.h2lib.org/, 
https://github.com/H2Lib/H2Lib, and HLib, www.hlib.org, developed at the
Max Planck Institute for Mathematics in the Sciences. 

\begin{definition}\label{defneut} {\rm (Neutered Block Columns. See 
\cite{MRT05}.)} With each diagonal block of a block matrix 
associate 
its complement in its block column,
and call this complement a {\em neutered block column}.
\end{definition}

\begin{definition}\label{defqs} {\rm (HSS matrices. See 
\cite{CGS07},
 \cite{X12},  \cite{X13}, \cite{XXCB14}.)}
  
A block 
matrix $M$ of size  $m\times n$ is 
called an $r$-{\em HSS matrix}, for a positive integer $r$, 

(i) if all diagonal blocks of this matrix
consist of $O((m+n)r)$ entries overall
and
 
(ii) if $r$ is the maximal rank of its neutered block columns. 
\end{definition}

\begin{remark}\label{reqs}
Many authors work with $(l,u)$-HSS
(rather than $r$-HSS) matrices $M$ for which $l$ and $u$
are the maximal ranks of the sub- and super-diagonal blocks,
respectively.
The $(l,u)$-HSS and $r$-HSS matrices are closely related. 
If a neutered block column $N$
is the union of a sub-diagonal block $B_-$ and 
a super-diagonal block $B_+$,
then
 $\rank (N)\le \rank (B_-)+\rank (B_+)$,
 and so
an $(l,u)$-HSS matrix is an $r$-HSS matrix,
for $r\le l+u$,
while clearly an $r$-HSS matrix is  
a $(r,r)$-HSS matrix.
\end{remark}

The FMM exploits the $r$-HSS structure of a matrix as follows:

(i) Cover all off-block-diagonal entries
with a set of  non-overlapping neutered block columns.  

(ii) Express every neutered block column $N$ of this set
  as the product
  $FH$ of two  {\em generator
matrices}, $F$ of size $h\times r$
and $H$ of size $r\times k$. Call the 
pair $\{F,H\}$ a {\em length $r$ generator} of the 
neutered block column $N$. 

(iii)  Multiply 
the matrix $M$ by a vector by separately multiplying generators
and diagonal blocks by subvectors,  involving $O((m+n)r)$ flops
overall, and

(iv) in a more advanced application of  
 FMM solve a nonsingular $r$-HSS linear system of $n$
equations  by using
$O(nr\log^2(n))$ flops under some mild additional assumptions on  the input. 

This approach is readily extended to the same operations with  
$(r,\xi)$-{\em HSS matrices},
that is, matrices approximated by $r$-HSS matrices
within a perturbation norm bound $\xi$ where a  positive tolerance 
$\xi$ is small in context (for example, is the unit round-off).
 Likewise, one defines 
an  $(r,\xi)$-{\em HSS representation} and 
$(r,\xi)$-{\em generators}.

$(r,\xi)$-HSS matrices (for $r$ small in context)
appear routinely in matrix computations,
and computations with such matrices are 
performed  efficiently by using the
above techniques.


As we said already, in some applications of the FMM (see  \cite{BGP05} and  \cite{VVVF10})
stage (ii) is omitted because short generators for all 
neutered block columns are readily available,
 but this is not the case in some other important applications. 
This stage of the computation of  generators is precisely 
 LRA of the neutered block
columns; it turns out to be 
the bottleneck stage of FMM in these applications, and sublinear cost LRA algorithms  
provide a remedy. 

First suppose that a known LRA algorithm, e.g., the algorithm of \cite{HMT11}
 with a Gaussian multiplier, is applied  at this stage.
Multiplication of a $q\times h$ matrix
by an $h\times r$ Gaussian matrix requires $(2h-1)qr$ flops,
while
standard HSS-representation of an $n\times n$ 
HSS matrix includes $q\times h$ neutered 
 block columns for $q\approx m/2$ and $h\approx n/2$. In this case 
the cost of computing an $r$-HSS representation of the
matrix $M$ has at least order $mnr$.
For $r\ll \min\{m,n\}$, this
is much greater than 
 $O((m+n)r\log^2(n))$ flops, used  at the other stages of 
the computations. 
We, however, readily match  the latter bound   
at the stage of computing 
$(r,\xi)$-generators as well -- simply
by applying sublinear cost LRA algorithms.
    

\section{Computation of  LRAs for Benchmark HSS Matrices}
\label{sHSS}     
 

In this section we cover our   tests of the Superfast Multipole Method where we  applied C--A iterations
in order to compute  LRA of the generators of the off-diagonal blocks of 
HSS matrices.  Namely we tested HSS matrices that approximate $1024 \times 1024$ Cauchy-like matrices 
derived from  benchmark Toeplitz matrices B, C, D, E, and F of \cite[Section 5]{XXG12}. For the computation of  LRA we applied the algorithm of \cite{GOSTZ10}.

Table 1
displays the relative errors of
the approximation of the  $1024 \times 1024$ HSS input matrices
 in the spectral and +shev norms averaged over 100 tests.
 Each approximation was obtained by 
 means of   
 combining the exact diagonal blocks 
 and  LRA of the off-diagonal blocks.
  We
 computed  LRA of all these blocks 
 superfast.
 
In good accordance with extensive empirical evidence about the power of C--A iterations, already the first C--A loop has consistently output reasonably close  LRA, but our  further improvement was achieved in five C--A loops in our tests for all but one of the five families of input matrices.

The reported HSS rank is the larger of the numerical ranks for the $512 \times 512$ off-diagonal blocks. This HSS rank was used as an upper bound in our binary search that determined the numerical rank of each off-diagonal block for the purpose of computing its LRA. We based 
the binary search on  minimizing the difference (in the spectral norm) between each off-diagonal block and its LRA. 

The output error norms were quite low. Even in the case of
 the matrix C, obtained from Prolate Toeplitz matrices -- extremely ill-conditioned, they ranged from $10^{-3}$
 to $10^{-6}$.

We have also performed further numerical experiments on all the HSS input matrices by using a hybrid LRA algorithm: we used random pre-processing with Gaussian and Hadamard (abridged and permuted)  multipliers (cf. \cite{PLSZ19}) by incorporating them into
 Algorithm 4.1 of \cite{HMT11}, but applying it only to the  off-diagonal blocks of smaller sizes while retaining our previous 
 way for computing  LRA of the larger off-diagonal blocks. We have not displayed the results of these experiments because they yielded no substantial improvement in accuracy in comparison to the exclusive use of the less expensive  LRA on all off-diagonal blocks. 
 
\begin{table}[ht]
\begin{center}
\begin{tabular}{|c|c|c|c|c|c|c|}  
\hline
 &  & & \multicolumn{2}{c|}{\bf Spectral Norm} & \multicolumn{2}{c|}{\bf Chebyshev Norm} \\ \hline 

{\bf Inputs} & {\bf C--A loops}  &  {\bf HSS rank} & {\bf mean} & {\bf std} & {\bf mean} & {\bf std} \\ \hline


\multirow{2}*{B}

\multirow{2}*{B}
&1 &  26 & 8.11e-07 & 1.45e-06 & 3.19e-07 & 5.23e-07 \\  \cline{2-7}
&5 &  26 & 4.60e-08 & 6.43e-08 & 7.33e-09 & 1.22e-08 \\  \hline
\multirow{2}*{C}
&1 &  16 & 5.62e-03 & 8.99e-03 & 3.00e-03 & 4.37e-03 \\  \cline{2-7}
&5 &  16 & 3.37e-05 & 1.78e-05 & 8.77e-06 & 1.01e-05 \\  \hline
\multirow{2}*{D}
&1 &  13 & 1.12e-07 & 8.99e-08 & 1.35e-07 & 1.47e-07 \\  \cline{2-7}
&5 &  13 & 1.50e-07 & 1.82e-07 & 2.09e-07 & 2.29e-07 \\  \hline
\multirow{2}*{E}
&1 &  14 & 5.35e-04 & 6.14e-04 & 2.90e-04 & 3.51e-04 \\  \cline{2-7}
&5 &  14 & 1.90e-05 & 1.04e-05 & 5.49e-06 & 4.79e-06 \\  \hline
\multirow{2}*{F}
&1 &  37 & 1.14e-05 & 4.49e-05 & 6.02e-06 & 2.16e-05 \\  \cline{2-7}  
&5 &  37 & 4.92e-07 & 8.19e-07 & 1.12e-07 & 2.60e-07 \\  \hline
 
\end{tabular}

\caption{LRA approximation of HSS matrices from \cite{XXG12}}
\end{center}
\label{tabhss}
\end{table}


\bigskip 
\medskip


\smallskip 


\bigskip


{\bf \Large Appendix:}
{\bf Small families of hard inputs for sublinear  LRA}

\appendix 

\medskip


  Any sublinear cost LRA algorithm
  fails on the following  small input families.
  \smallskip
  
{\bf Example.} 
Define a family
of $m\times n$ matrices 
 of rank 1 (we call them $\delta$-{\em matrices}):  $$\{\Delta_{i,j}, ~{i=1, \dots,m;~j=1,\dots,n}\}.$$ 
 Also include the $m\times n$ null matrix $O_{m,n}$ into this family.
Now fix any sublinear cost  algorithm; it does not access the $(i,j)$th  
entry of its input matrices  for some pair of $i$ and $j$. Therefore it outputs the same approximation 
of the matrices $\Delta_{i,j}$ and $O_{m,n}$,
with an undetected  error at least 1/2.
Apply the same argument to the
set of $mn+1$ small-norm perturbations of 
the matrices of the above family and to the                                                                                                                                   
 $mn+1$ sums  
of the latter matrices with  any
   fixed $m\times n$ matrix of low rank.
Finally, the same argument shows that 
a posteriori estimation of the output errors of an LRA algorithm
applied to the same input families
cannot run at  sublinear cost. 

This example actually covers randomized LRA algorithms as well. Indeed suppose that with a positive constant probability  an LRA algorithm does not access $K$  entries of an input matrix and apply 
this algorithm to two matrices of low rank whose  difference at all these entries is equal to a large constant $C$. Then, clearly, with a positive constant probability the algorithm has errors
at least $\frac{C}{2}$  at at least $\frac{K}{2}$
of these entries. 


\medskip 


\noindent {\bf Acknowledgements:} 
Our research has been supported by NSF Grants CCF--1116736, CCF--1563942,  CCF--133834
and PSC CUNY Awards 62797-00-50 and 63677-00-51.
We also thank A. Cortinovis,
A. Osinsky, N. L. Zamarashkin
for pointers to their papers \cite{CKM19}
 and \cite{OZ18}, 
S. A. Goreinov for sending reprints  of his papers,
 and E. E. Tyrtyshnikov for 
pointers to the bibliography and
  the challenge of
 formally supporting empirical power of C--A 
 algorithms.



\end{document}